\newcommand{\ZZ}{\mathbb{Z}}
\newcommand{\QQ}{\mathbb{Q}}
\renewcommand{\AA}{\mathbb{A}}
\newcommand{\oo}{\mathfrak{o}}
\newcommand{\pp}{\mathfrak{p}}
\newcommand{\norm}{\text{N}}
\newcommand{\GL}{\text{GL}}
\renewcommand{\pmod}[1]{\text{ (mod }#1)}
\newcommand{\ord}{\text{ord}}
\newcommand{\pclass}{\text{cls}^{+}}
\newcommand{\pgenus}{\text{gen}^{+}}
\newcommand{\pmass}{m^{+}}
\newcommand{\biggg}{\bBigg@\thr@@}
\newcommand{\Biggg}{\bBigg@{3.5}}
\theoremstyle{definition}
\newtheorem{theorem}{Theorem}[section]
\newtheorem{proposition}[theorem]{Proposition}
\newtheorem{lemma}[theorem]{Lemma}
\newtheorem{corollary}[theorem]{Corollary}
\newtheorem{remark}[theorem]{Remark}
\numberwithin{equation}{section}
\numberwithin{table}{section}
\title{Class numbers of binary quadratic polynomials}
\author{Zichen Yang}
\address{Department of Mathematics\\The University of Hong Kong\\Pokfulam, Hong Kong}
\email{zichenyang.math@gmail.com}
\thanks{The research of the author was supported by the Dissertation Year Fellowship of the University of Hong Kong.}
\date{\today}
\keywords{quadratic polynomials, shifted lattices, class numbers, mass}
\subjclass[2020]{11E08, 11E12, 11E16, 11E41}
\begin{document}

\begin{abstract}
In this paper, we give a formula for the proper class number of a binary quadratic polynomial assuming that the conductor ideal is sufficiently divisible at dyadic places. This allows us to study the growth of the proper class numbers of totally positive binary quadratic polynomials. As an application, we prove finiteness results on totally positive binary quadratic polynomials with a fixed quadratic part and a fixed proper class number.
\end{abstract}

\maketitle

\section{Introduction}

Let $K$ be a number field. Denote by $\oo$ the ring of integers of $K$. Suppose that $Q(\mathbf{x})$ is a quadratic form with coefficients in $\oo$. One of the fundamental problems in the arithmetic theory of quadratic forms is to solve the Diophantine equation $Q(\mathbf{x})=n$ for $n\in\oo$. In geometric terms, it can be interpreted as the problem of finding a vector $v$ in a lattice $L$ in a quadratic space $(V,Q)$ over $K$ with $Q(v)=n$. Such a vector is called a representation of $n$ by $L$. 

One way to study the representations by $L$ is to consider representations by the localization $L_{\pp}$ for all places $\pp\in\Omega$, where $\Omega$ is the set of non-trivial places of $K$. It is a well-known fact that an element $n\in\oo$ is represented by $L_{\pp}$ for all places $\pp\in\Omega$ if and only if it is represented by a lattice in the proper genus of $L$. In a very special case that the proper genus of $L$ contains only one proper class, then an element $n\in\oo$ is represented by $L$ if and only if it is represented by $L_{\pp}$ for all places $\pp\in\Omega$, that is, the local-global principle holds for $L$. On the other hand, if the proper genus contains more than one proper class, then the number of proper classes, that is the so-called proper class number of $L$, can be viewed as a measurement of the obstruction of the local-global principle. So it is an important problem in the arithmetic theory of lattices to calculate the proper class number. One method to estimate the proper class number is to compute the proper mass. By the celebrated Siegel--Minkowski--Smith mass formula \cite{smith1867orders,minkowski1885untersuchungen,siegel1935uber,siegel1936uber,siegel1937uber}, the proper mass of $L$ factors into a product of local densities of $L$ over $K_{\pp}$ for all places $\pp\in\Omega$. The explicit formulas for the local densities are known in many cases \cite{kitaoka1999arithmetic,gan2000group,cho2015group}.

It is natural to investigate extensions of this theory to inhomogeneous quadratic polynomials. Analogous to the connection between quadratic forms and lattices, for a quadratic polynomial, one can associate it with a shifted lattice $X$ in a quadratic space $V$ over $K$. To be precise, a shifted lattice is a subset of $V$ of the form $L+\nu$ with a lattice $L$ in $V$ and a vector $\nu\in V$. The notions of localization, the proper class number, and the proper mass can be defined for shifted lattices in a similar manner. For a systematic treatment, see \cite[Section 4]{chan2012representations}. Siegel \cite{siegel1935uber,siegel1936uber,siegel1937uber} himself studied the mass formula for shifted lattices. But he excluded totally positive cases, which were later covered by Van der Blij \cite{van1949theory}. Recently this topic was picked up by Sun \cite{sun2016class,sun2018growth}. In \cite[Theorem 2.4]{sun2016class}, Sun proved a mass formula for shifted lattices over number fields. Comparing to the mass formula for lattices, we have
\begin{equation}
\label{eqn::vanderblij}
\pmass(X)=\pmass(L)\prod_{\pp\mid\mathfrak{M}_X}[O^{+}(L_{\pp}):O^{+}(X_{\pp})],
\end{equation}
where $\mathfrak{M}_X$ is the conductor ideal of $X$ that is generated by the elements $a\in\oo$ such that $a\nu\in L$ and we denote by $\pmass(X)$ and $O^{+}(X)$ the proper mass and the proper orthogonal group of a shifted lattice $X$, respectively. 

In this paper, we are interested in the proper class number of a shifted lattice in a quadratic space $V$ of dimension $2$. In this case, the adelization $O_{\AA}^{+}(V)$ is commutative. Thus (\ref{eqn::vanderblij}) implies that
$$
h^{+}(X)=\frac{h^{+}(L)}{[O^{+}(L):O^{+}(X)]}\prod_{\pp\mid\mathfrak{M}_X}[O^{+}(L_{\pp}):O^{+}(X_{\pp})],
$$
where we denote by $h^{+}(X)$ the proper class number of a shifted lattice $X$. Therefore, to obtain a class number formula, it suffices to calculate the group index $[O^{+}(L_{\pp}):O^{+}(X_{\pp})]$ for finitely many places $\pp\in\Omega$. In \cite[Lemmas 3.3--3.5]{sun2018growth}, Sun calculated the group index $[O^{+}(L_{\pp}):O^{+}(X_{\pp})]$ when $\pp\nmid2$. However, the calculations contain errors. Thus one goal of this paper is to prove a corrected formula when $\pp\nmid2$. Furthermore, by the same method, we obtain a simple and exact formula for the group index when $\pp\mid2$ under a mild divisibility condition on the conductor ideal.

To perform the local computation, we need to fix a local structure of the shifted lattice $X=L+\nu$. To be precise, we write $L\simeq G$ if the Gram matrix of the lattice $L$ with respect to some basis of $L$ is $G$. Let $\Omega_f$ denote the set of all non-archimedean places of $K$. For any place $\pp\in\Omega_f$, we show in Lemma \ref{thm::threetypes} that if $\mathfrak{s}_{L_{\pp}}\subseteq\oo_{\pp}$, then there exists a basis of $L_{\pp}$ such that $L_{\pp}$ is of one of the following types
\begin{enumerate}
\item $L_{\pp}\simeq cD(1,\alpha)$ with $c,\alpha\in\oo_{\pp}$,
\item $L_{\pp}\simeq cA(0,0)$ with $c\in\oo_{\pp}$,
\item $L_{\pp}\simeq cA(\alpha,\beta)$ with $c,\alpha,\beta\in\oo_{\pp}$ such that $1\leq\ord_{\pp}(\alpha)\leq\min(\ord_{\pp}(2),\ord_{\pp}(\beta))$,
\end{enumerate}
where the matrices $D(\alpha,\beta)$ and $A(\alpha,\beta)$ are defined in Section \ref{sec::startlocal}. Moreover, if $K_{\pp}$ is non-dyadic, then there exists a basis such that $L_{\pp}$ is of type (1). We say that a collection $\{M_{\pp}\}_{\pp\mid\mathfrak{M}_X}$ consisting of matrices of the above three types is a local structure of $X$, if $L_{\pp}\simeq M_{\pp}$ for all places $\pp\mid\mathfrak{M}_X$ and $M_{\pp}$ is of type (1) for all places $\pp\mid\mathfrak{M}_X$ such that $\pp\nmid2$.

Since the class numbers are invariant under scaling, we may assume that $\mathfrak{s}_L\subseteq\oo$. For a shifted lattice $X=L+\nu$ in a quadratic space of dimension $2$ over $K$, we define
$$
\beta_2^{+}(X)\coloneqq\prod_{\pp\mid\gcd(\mathfrak{M}_X,2\oo)}\beta_{\pp}^{+}(L_{\pp};L_{\pp}),
$$
where $\beta_{\pp}^{+}(L_{\pp};L_{\pp})$ is defined in (\ref{eqn::definebeta}). Let $\norm_{K/\QQ}$ denote the norm of the extension $K/\QQ$. 

\begin{theorem}
\label{thm::maintheorem1}
Let $X=L+\nu$ be a shifted lattice in a quadratic space $(V,Q)$ of dimension $2$ over $K$ such that $\det(V)\not\in-(K^{\times})^2$. Assume that $\mathfrak{s}_L\subseteq\oo$. Fix a local structure $\{M_{\pp}\}_{\pp\mid\mathfrak{M}_X}$ of $X$ such that the following assumptions hold
\begin{enumerate}
\item $\ord_{\pp}(\mathfrak{M}_X)\geq\ord_{\pp}(2\alpha)+1$, if $M_{\pp}=cD(1,\alpha)$ for $\pp\mid\gcd(\mathfrak{M}_X,2\oo)$,
\item $\ord_{\pp}(\mathfrak{M}_X)\geq\ord_{\pp}(2)+1$, if $M_{\pp}$ is of type (3) for $\pp\mid\gcd(\mathfrak{M}_X,2\oo)$.
\end{enumerate}
Then we have
$$
h^{+}(X)=\frac{h^{+}(L)}{[O^{+}(L):O^{+}(X)]}\cdot\frac{\beta_2^{+}(X)\norm_{K/\QQ}(\mathfrak{M}_X)}{\norm_{K/\QQ}(\gcd(\mathfrak{M}_X,\mathfrak{I}_X))}\prod_{\substack{\pp\mid\mathfrak{M}_X,\pp\nmid2\\M_{\pp}=cD(1,\alpha)}}\left(1-\frac{\eta(-\alpha)}{\norm_{K/\QQ}(\pp)}\right),
$$
where $\eta(x)$ is defined in (\ref{eqn::defineeta}) and $\mathfrak{I}_X$ is an ideal of $\oo$ defined by
$$
\mathfrak{I}_X\coloneqq\prod_{\substack{\pp\mid\mathfrak{M}_X\\M_{\pp}=cD(1,\alpha)}}\pp^{\min(\ord_{\pp}(s_1),\ord_{\pp}(s_1^2+\alpha s_2^2))},
$$
with $L_{\pp}\simeq M_{\pp}=cD(1,\alpha)$ with respect to a basis $\{e_1,e_2\}$ and
$$
\nu=\frac{s_1e_1+s_2e_2}{\pi_{\pp}^{\mathfrak{t}_m}}
$$
for $s_1,s_2\in\oo_{\pp}$, $\mathfrak{t}_m=\ord_{\pp}(\mathfrak{M}_X)$, and a uniformizer $\pi_{\pp}$ of $K_{\pp}$.
\end{theorem}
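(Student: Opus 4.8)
The plan is to reduce the stated identity to a product of local group indices $[O^{+}(L_{\pp}):O^{+}(X_{\pp})]$ over the primes $\pp\mid\mathfrak{M}_X$, and then to evaluate each such index using the fixed local structure $\{M_{\pp}\}_{\pp\mid\mathfrak{M}_X}$ and the normal forms of Lemma \ref{thm::threetypes}. As recalled in the paragraph preceding the statement, in dimension $2$ the adelization $O^{+}_{\AA}(V)$ is commutative, so combining this with the mass identity (\ref{eqn::vanderblij}) for shifted lattices yields
$$
h^{+}(X)=\frac{h^{+}(L)}{[O^{+}(L):O^{+}(X)]}\prod_{\pp\mid\mathfrak{M}_X}[O^{+}(L_{\pp}):O^{+}(X_{\pp})];
$$
here the hypothesis $\det(V)\not\in-(K^{\times})^2$ keeps $V$ anisotropic over $K$, so that the genus theory of the binary space and this formula are in force. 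Each of the remaining factors in the theorem — $\norm_{K/\QQ}(\mathfrak{M}_X)$, $\beta_2^{+}(X)$, $\norm_{K/\QQ}(\gcd(\mathfrak{M}_X,\mathfrak{I}_X))$ and the $\eta$-product — is a finite product over $\pp\mid\mathfrak{M}_X$ of local contributions, so it suffices to establish a local formula for $[O^{+}(L_{\pp}):O^{+}(X_{\pp})]$ at each such $\pp$ and then to multiply the results, tracking along the way how hypotheses (1) and (2) interact with $\gcd(\mathfrak{M}_X,\mathfrak{I}_X)$.

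The next step is to give the local index a geometric meaning. Since $\sigma\in O^{+}(L_{\pp})$ preserves $X_{\pp}=L_{\pp}+\nu$ precisely when $\sigma\nu-\nu\in L_{\pp}$, the index $[O^{+}(L_{\pp}):O^{+}(X_{\pp})]$ equals the cardinality of the orbit of $\nu+L_{\pp}$ under $O^{+}(L_{\pp})$ in the finite abelian group $\pi_{\pp}^{-\mathfrak{t}_m}L_{\pp}/L_{\pp}$, where $\mathfrak{t}_m=\ord_{\pp}(\mathfrak{M}_X)$. In dimension $2$ the group $O^{+}(L_{\pp})$ is the group of norm-one units of an order in the quadratic $K_{\pp}$-algebra $K_{\pp}(\sqrt{-\det V})$; fixing the basis that realizes $M_{\pp}$ one writes it down explicitly together with its congruence filtration — for $M_{\pp}=cD(1,\alpha)$ it is $\{\left(\begin{smallmatrix}x&\alpha y\\-y&x\end{smallmatrix}\right):x,y\in\oo_{\pp},\ x^2+\alpha y^2=1\}$, with analogous descriptions for types (2) and (3) — and then one computes the orbit of the residue class of $s_1e_1+s_2e_2$ directly.

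For $\pp\nmid2$, so $M_{\pp}=cD(1,\alpha)$, Hensel's lemma is available, and the orbit count reduces to counting solutions of $x^2+\alpha y^2=1$ modulo a bounded power of $\pp$; the symbol $\eta(-\alpha)$ records the splitting behaviour of $K_{\pp}(\sqrt{-\alpha})/K_{\pp}$ and governs this count, producing the factor $1-\eta(-\alpha)/\norm_{K/\QQ}(\pp)$, while the stabilizer of $\nu+L_{\pp}$ contributes the power of $\norm_{K/\QQ}(\pp)$ encoded by $\mathfrak{I}_X$. Distinguishing the regimes $\ord_{\pp}(s_1)\leq\ord_{\pp}(\alpha s_2^2)$ and $\ord_{\pp}(s_1)>\ord_{\pp}(\alpha s_2^2)$ carefully is exactly the point where the computation of \cite[Lemmas 3.3--3.5]{sun2018growth} goes wrong, so I would isolate the corrected non-dyadic index as a separate lemma.

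The dyadic case is where I expect the real difficulty to lie. For $\pp\mid2$ the squaring map on the relevant order is no longer \'etale, so the Hensel reduction above fails; the divisibility hypotheses $\ord_{\pp}(\mathfrak{M}_X)\geq\ord_{\pp}(2\alpha)+1$ (type (1)) and $\ord_{\pp}(\mathfrak{M}_X)\geq\ord_{\pp}(2)+1$ (type (3)) are imposed precisely so that the quotient $\pi_{\pp}^{-\mathfrak{t}_m}L_{\pp}/L_{\pp}$ sits below the level at which the norm-one condition ceases to behave ``linearly'': on that quotient the action of $O^{+}(L_{\pp})$ becomes transparent, and the orbit count reduces once more to a local-density computation, which one matches with the local factor $\beta_{\pp}^{+}(L_{\pp};L_{\pp})$ of (\ref{eqn::definebeta}) — this is why $\beta_2^{+}(X)$ enters the final answer, the extra power of $\norm_{K/\QQ}(\pp)$ coming from $\mathfrak{I}_X$ surviving exactly when $M_{\pp}=cD(1,\alpha)$. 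Finally, multiplying the local indices of the last two paragraphs over all $\pp\mid\mathfrak{M}_X$: the non-dyadic primes contribute the $\eta$-product together with the non-dyadic part of $\norm_{K/\QQ}(\mathfrak{M}_X)/\norm_{K/\QQ}(\gcd(\mathfrak{M}_X,\mathfrak{I}_X))$, the dyadic primes contribute $\beta_2^{+}(X)$ together with its dyadic part, and substituting into the reduction formula of the first paragraph gives the theorem.
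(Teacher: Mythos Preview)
Your outline is correct and follows essentially the same route as the paper: reduce to the class-number identity of Proposition~\ref{thm::classnumberformula}, interpret each local index $[O^{+}(L_{\pp}):O^{+}(X_{\pp})]$ via the explicit parametrization of $O^{+}(L_{\pp})$ (the paper's Lemma~\ref{thm::explicitorthogonalgroup} and Lemma~\ref{thm::rewriteorthogonalgrouptotal}), and then evaluate the index case by case according to the local type of $M_{\pp}$, with the divisibility hypotheses (1)--(2) used precisely to push the dyadic congruence system into the range where a Hensel-type induction applies.

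The only substantive difference is packaging: where you speak of the orbit of $\nu+L_{\pp}$ in $\pi_{\pp}^{-\mathfrak{t}_m}L_{\pp}/L_{\pp}$, the paper instead computes the local densities $\beta_{\pp}^{+}(L_{\pp};L_{\pp})$ and $\beta_{\pp}^{+}(X_{\pp};X_{\pp})$ directly as limits of solution counts to the congruence systems $M^{T}GM\equiv G$, $\det M\equiv 1$, $M\nu\equiv\nu$ (Theorems~\ref{thm::diagonalp}--\ref{thm::remainingcase}, resting on the counting Lemmas~\ref{thm::system0}--\ref{thm::system3}); the ratio of these densities is your orbit size. Your plan leaves the dyadic case deliberately sketchy (``the action becomes transparent''); in the paper this is exactly the content of Lemmas~\ref{thm::system1}--\ref{thm::system3}, where the hypothesis $\mathfrak{t}_m\geq e+1$ (resp.\ $\mathfrak{t}_m\geq\ord_{\pp}(\alpha)+e+1$) forces the quadratic term in the relevant congruence to be dominated so that $x_1$ is determined digit-by-digit from $x_2$ --- that is the precise mechanism behind your ``linearization'' remark, and it is the part that actually requires work.
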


As an application, we study the growth of the proper class numbers of totally positive shifted lattices as the norm of the conductor ideal increases.

\begin{corollary}
\label{thm::maintheorem2}
Suppose that $K$ is a totally real number field. Let $X=L+\nu$ be a totally positive shifted lattice in a quadratic space of dimension $2$ over $K$. For any positive number $\varepsilon>0$, we have
$$
h^{+}(X)\gg_{L,\varepsilon}\norm_{K/\QQ}(\mathfrak{M}_X)^{1-\varepsilon},
$$
as $\norm_{K/\QQ}(\mathfrak{M}_X)\to\infty$. The implied constant is effectively computable.
\end{corollary}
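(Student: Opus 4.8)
The plan is to apply Theorem \ref{thm::maintheorem1} after a harmless modification of $X$, and then to bound each factor in the resulting formula from below. Since class numbers and conductor ideals are unaffected by rescaling the quadratic form, I may assume $\mathfrak{s}_L\subseteq\oo$. Two preliminary points: first, as $L$ has full rank in $V$, the hypothesis that $Q$ is totally positive on $X=L+\nu$ forces $(V,Q)$ to be positive definite at every real place of $K$ (otherwise $Q$ would take values of both signs on the infinite set $X$), so $\det(V)$ is totally positive and in particular $\det(V)\notin-(K^{\times})^{2}$, i.e.\ the standing hypothesis of Theorem \ref{thm::maintheorem1} holds automatically. Second, $O^{+}(L)$, being a discrete subgroup of the compact group $\prod_{\sigma}O^{+}(V\otimes_{K}K_{\sigma})$ over the real places $\sigma$, is finite; hence $[O^{+}(L):O^{+}(X)]\le|O^{+}(L)|$ and the factor $h^{+}(L)/[O^{+}(L):O^{+}(X)]$ is $\gg_{L}1$. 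I will use the identity $h^{+}(X)=\tfrac{h^{+}(L)}{[O^{+}(L):O^{+}(X)]}\prod_{\pp\mid\mathfrak{M}_X}[O^{+}(L_{\pp}):O^{+}(X_{\pp})]$ recorded in the introduction, along with the trivial bound $[O^{+}(L_{\pp}):O^{+}(X_{\pp})]\ge1$.

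Next I would reduce to a shifted lattice for which the divisibility hypotheses (1)--(2) of Theorem \ref{thm::maintheorem1} hold. Let $S$ be the set of dyadic primes $\pp\mid\mathfrak{M}_X$ at which (1) or (2) fails. The exponents $\ord_{\pp}(2\alpha)$ and $\ord_{\pp}(2)$ appearing there are bounded in terms of $L$ and $K$ alone (for $\ord_{\pp}(\alpha)$, because up to units $\alpha$ is determined by the scale and volume of $L_{\pp}$), so $S$ is finite and $\ord_{\pp}(\mathfrak{M}_X)\le C_{L}$ for $\pp\in S$; thus $\norm_{K/\QQ}(\mathfrak{M}_X)\le B_{L}\,\norm_{K/\QQ}(\mathfrak{M}_X')$, where $\mathfrak{M}_X'\coloneqq\prod_{\pp\mid\mathfrak{M}_X,\,\pp\notin S}\pp^{\ord_{\pp}(\mathfrak{M}_X)}$ and $B_{L},C_{L}$ are effective. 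By approximation I may choose $\nu'\in V$ with $\nu'-\nu\in L_{\pp}$ for every finite $\pp\notin S$ and $\nu'\in L_{\pp}$ for every $\pp\in S$; then $X'\coloneqq L+\nu'$ is again a totally positive shifted lattice in $V$ with $X'_{\pp}=X_{\pp}$ for $\pp\notin S$ and $\mathfrak{M}_{X'}=\mathfrak{M}_X'$, so Theorem \ref{thm::maintheorem1} applies to $X'$. Comparing the class number identities for $X$ and for $X'$, and using $[O^{+}(L_{\pp}):O^{+}(X_{\pp})]\ge1$ for $\pp\in S$ together with $1\le[O^{+}(L):O^{+}(X')]\le|O^{+}(L)|$, yields $h^{+}(X)\gg_{L}h^{+}(X')$.

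It then remains to apply Theorem \ref{thm::maintheorem1} to $X'$ and bound its nontrivial factors. The factor $\beta_{2}^{+}(X')=\prod_{\pp\mid\gcd(\mathfrak{M}_{X'},2\oo)}\beta_{\pp}^{+}(L_{\pp};L_{\pp})$ is $\gg_{L}1$, since each $\beta_{\pp}^{+}(L_{\pp};L_{\pp})$ is a fixed positive quantity depending only on $L_{\pp}$ and there are at most $[K:\QQ]$ dyadic primes. For the defect ideal, minimality of $\ord_{\pp}(\mathfrak{M}_X)$ forces at least one of $s_{1},s_{2}$ to be a $\pp$-unit, and a short case analysis then shows that the $\pp$-exponent $\min(\ord_{\pp}(s_{1}),\ord_{\pp}(s_{1}^{2}+\alpha s_{2}^{2}))$ of $\mathfrak{I}_{X'}$ vanishes whenever $\alpha$ is a $\pp$-unit and is at most $\ord_{\pp}(\alpha)$ in general; hence $\mathfrak{I}_{X'}$ divides the ideal $\prod_{\pp:\,M_{\pp}=cD(1,\alpha)}\pp^{\ord_{\pp}(\alpha)}$, which depends only on $L$, so $\norm_{K/\QQ}(\gcd(\mathfrak{M}_{X'},\mathfrak{I}_{X'}))\ll_{L}1$. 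Finally, since $|\eta(\cdot)|\le1$ by its definition, every factor of $\prod_{\pp\mid\mathfrak{M}_{X'},\,\pp\nmid2,\,M_{\pp}=cD(1,\alpha)}\bigl(1-\eta(-\alpha)/\norm_{K/\QQ}(\pp)\bigr)$ is $\ge1-\norm_{K/\QQ}(\pp)^{-1}$, so the product is $\ge\prod_{\pp\mid\mathfrak{M}_X}\bigl(1-\norm_{K/\QQ}(\pp)^{-1}\bigr)$, which by the Mertens-type estimate $\prod_{\pp\mid\mathfrak{m}}\bigl(1-\norm_{K/\QQ}(\pp)^{-1}\bigr)\gg_{K}1/\log\log\norm_{K/\QQ}(\mathfrak{m})$ is $\gg_{\varepsilon}\norm_{K/\QQ}(\mathfrak{M}_X)^{-\varepsilon}$. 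Multiplying these bounds (and $h^{+}(L)/[O^{+}(L):O^{+}(X')]\gg_{L}1$) gives $h^{+}(X')\gg_{L,\varepsilon}\norm_{K/\QQ}(\mathfrak{M}_{X'})^{1-\varepsilon}$, whence $h^{+}(X)\gg_{L}h^{+}(X')\gg_{L,\varepsilon}\norm_{K/\QQ}(\mathfrak{M}_X)^{1-\varepsilon}$; since $h^{+}(L)$, $|O^{+}(L)|$, the $\beta_{\pp}^{+}(L_{\pp};L_{\pp})$, $B_{L}$, the defect ideal, and the Mertens constant are all effectively computable, so is the implied constant.

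The step I expect to be the main obstacle is the uniform bound $\norm_{K/\QQ}(\gcd(\mathfrak{M}_X,\mathfrak{I}_X))\ll_{L}1$: this is precisely what prevents the polynomial main term $\norm_{K/\QQ}(\mathfrak{M}_X)$ from being eroded, and it rests on combining the minimality of the conductor exponent with the fact that $\ord_{\pp}(\alpha)$ vanishes outside the finitely many primes where $L_{\pp}$ is not ``split''. The approximation step producing $X'$ is routine bookkeeping, and the Mertens estimate is absorbed harmlessly into $\varepsilon$.
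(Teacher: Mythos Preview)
Your argument is correct and follows the same skeleton as the paper's proof: reduce to $\mathfrak{s}_L\subseteq\oo$, use finiteness of $O^{+}(L)$, bound $\norm_{K/\QQ}(\gcd(\mathfrak{M}_X,\mathfrak{I}_X))\ll_{L}1$ via the fact that the defect exponent at $\pp$ is dominated by $\ord_{\pp}(\alpha)$, and absorb the Euler product into the $\varepsilon$. Two remarks.

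First, a small slip: it is not true that $|\eta(\cdot)|\le 1$, since $\eta(x)=-\norm(\pp)$ when $x\in\pp$. Fortunately this does not harm your bound: in that case the factor equals $1-(-\norm(\pp))/\norm(\pp)=2\ge 1-\norm(\pp)^{-1}$, so your inequality ``each factor is $\ge 1-\norm(\pp)^{-1}$'' still holds. Just replace the justification by the observation that $\eta(-\alpha)\in\{1,-1,-\norm(\pp)\}$ always gives a factor $\ge 1-\norm(\pp)^{-1}$. (The paper phrases this case split as ``a trick of Ramanujan''.)

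Second, your approximation step producing $X'$ is correct but heavier than necessary. You already invoke the general identity $h^{+}(X)=\frac{h^{+}(L)}{[O^{+}(L):O^{+}(X)]}\prod_{\pp\mid\mathfrak{M}_X}[O^{+}(L_{\pp}):O^{+}(X_{\pp})]$, which holds with no dyadic hypotheses. From there you can simply bound the finitely many bad dyadic local indices below by $1$ (their contribution to $\norm_{K/\QQ}(\mathfrak{M}_X)$ being $\ll_{L}1$ exactly as you argue), and apply the explicit local formulas at all remaining primes. The paper in fact glosses over this reduction entirely, so your treatment here is more careful than the original; it is just that the approximation argument can be replaced by the trivial lower bound $[O^{+}(L_{\pp}):O^{+}(X_{\pp})]\ge 1$ at the bad dyadic primes.
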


Therefore, fixing the totally positive lattice part, there are at most finitely many shifted lattices with a fixed proper class number.

\begin{corollary}
\label{thm::maintheorem3}
Suppose that $K$ is a totally real number field. Let $L$ be a totally positive lattice in a quadratic space of dimension $2$ over $K$. For a fixed integer $h\geq1$, there are only finitely many shifted lattices of the form $X=L+\nu$ such that $h^{+}(X)=h$.
\end{corollary}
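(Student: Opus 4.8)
The plan is to obtain Corollary~\ref{thm::maintheorem3} as a short consequence of the growth estimate of Corollary~\ref{thm::maintheorem2} combined with an elementary counting argument. First I would invoke Corollary~\ref{thm::maintheorem2} with $\varepsilon=1/2$: there exist an effectively computable constant $C=C(L)>0$ and a bound $N_0=N_0(L)$ such that $h^{+}(X)\geq C\,\norm_{K/\QQ}(\mathfrak{M}_X)^{1/2}$ for every totally positive shifted lattice $X=L+\nu$ with $\norm_{K/\QQ}(\mathfrak{M}_X)\geq N_0$. Splitting into the cases $\norm_{K/\QQ}(\mathfrak{M}_X)<N_0$ and $\norm_{K/\QQ}(\mathfrak{M}_X)\geq N_0$, one sees that $h^{+}(X)=h$ forces $\norm_{K/\QQ}(\mathfrak{M}_X)\leq N$, where $N\coloneqq\max\{N_0,\,h^{2}/C^{2}\}$ is effectively computable and depends only on $L$ and $h$.

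It then remains to check that only finitely many shifted lattices $X=L+\nu$ have conductor ideal of norm at most $N$. A shifted lattice $X=L+\nu$ is the same datum as the coset $\nu+L$ in $V/L$, so it suffices to bound the number of such cosets. There are only finitely many integral ideals $\mathfrak{M}\subseteq\oo$ with $\norm_{K/\QQ}(\mathfrak{M})\leq N$, and one can list them. For a fixed such $\mathfrak{M}$, any $\nu$ with $\mathfrak{M}_X=\mathfrak{M}$ satisfies $\mathfrak{M}\nu\subseteq L$ by the definition of the conductor ideal, i.e.\ $\nu\in\mathfrak{M}^{-1}L$; and the cosets $\nu+L$ with $\nu\in\mathfrak{M}^{-1}L$ form the finite group $\mathfrak{M}^{-1}L/L$, of order $[\mathfrak{M}^{-1}L:L]=\norm_{K/\QQ}(\mathfrak{M})^{2}$ (checked place by place, since each $L_{\pp}$ is free of rank $2$, whence $[\pp^{-k}L_{\pp}:L_{\pp}]=\norm_{K/\QQ}(\pp)^{2k}$). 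Summing over the admissible $\mathfrak{M}$ shows that at most $\sum_{\norm_{K/\QQ}(\mathfrak{M})\leq N}\norm_{K/\QQ}(\mathfrak{M})^{2}<\infty$ shifted lattices $X=L+\nu$ satisfy $h^{+}(X)=h$, which proves the corollary; the argument in fact exhibits an explicit finite superset of them.

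I do not expect a genuine obstacle here, as every step is elementary once Corollary~\ref{thm::maintheorem2} is in hand. The only points that need mild care are bookkeeping ones: turning the asymptotic inequality ``$h^{+}(X)\gg_{L,\varepsilon}\norm_{K/\QQ}(\mathfrak{M}_X)^{1-\varepsilon}$ as $\norm_{K/\QQ}(\mathfrak{M}_X)\to\infty$'' into the uniform bound $\norm_{K/\QQ}(\mathfrak{M}_X)\leq N$ (immediate from the case split above), and observing that ``a shifted lattice of the form $L+\nu$'' is precisely a coset of $L$ in $V$, so that the totally positive hypothesis on $L$ is inherited by each $X$ in the count and Corollary~\ref{thm::maintheorem2} does apply to it, the coset $X=L$ (if it occurs) contributing at most one extra lattice.
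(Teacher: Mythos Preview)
Your proposal is correct and follows essentially the same three-step approach as the paper's own proof: use Corollary~\ref{thm::maintheorem2} to bound $\norm_{K/\QQ}(\mathfrak{M}_X)$, invoke the finiteness of ideals of bounded norm, and then observe that only finitely many shifted lattices share a given conductor. You have simply made each step more explicit than the paper does (specifying $\varepsilon=1/2$, computing $[\mathfrak{M}^{-1}L:L]=\norm_{K/\QQ}(\mathfrak{M})^{2}$, and noting that total positivity is inherited from $L$).
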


The paper is organized as follows. In Section \ref{sec::pre}, we introduce basic notions about shifted lattices and prove some preparatory results for local computations. In Section \ref{sec::localcomputation}, we compute the group index $[O^{+}(L_{\pp})\colon O^{+}(X_{\pp})]$. Finally, in Section \ref{sec::main}, we combine the local computations to prove the main results.

\section{Preliminaries}
\label{sec::pre}

\subsection{Class numbers of shifted lattices}

Let $K$ be either a number field or a local completion of a number field. We denote by $\oo$ the ring of integers of $K$. Note that $\oo=K$ when $K$ is an archimedean completion. A quadratic space of dimension $r$ over $K$ is a vector space $V$ of dimension $r$ over $K$, equipped with a non-degenerate symmetric bilinear form $B\colon V\times V\to K$. The quadratic map $Q$ associated with $V$ is defined by $Q(v)\coloneqq B(v,v)$ for $v\in V$. A lattice in a quadratic space $V$ over $K$ is a finitely generated $\oo$-submodule of $V$, which spans $V$ over $K$. A shifted lattice in a quadratic space $V$ over $K$ is a subset of $V$ of the form $L+\nu$ with a lattice $L$ in $V$ and a vector $\nu\in V$. The conductor ideal $\mathfrak{M}_X$ of a shifted lattice $X=L+\nu$ is the integral ideal of $\oo$ consisting of the elements $a\in\oo$ such that $a\nu\in L$.

The proper orthogonal group $O^{+}(V)$ of $V$ is defined by
$$
O^{+}(V)\coloneqq\{\sigma\in\GL(V)\mid B(\sigma(v_1),\sigma(v_2))=B(v_1,v_2)\text{ for }v_1,v_2\in V\text{ and }\det(\sigma)=1\},
$$
which acts on the set of shifted lattices in $V$ by sending $X$ to $\sigma(X)$. The proper orthogonal group $O^{+}(X)$ of a shifted lattice $X$ is the stabilizer of $X$ under the action of $O^{+}(V)$. The proper class of a shifted lattice $X$, denoted by $\pclass(X)$, is the orbit of $X$ under the action of $O^{+}(V)$.

Now assume that $K$ is a number field. Denote by $\Omega$ the set of all non-trivial places of $K$ and denote by $\Omega_f$ the set of all non-archimedean places of $K$. For a quadratic space $V$ over $K$, the localization $V_{\pp}$ of $V$ at a place $\pp\in\Omega$ is the quadratic space $V\otimes K_{\pp}$ over $K_{\pp}$ with the bilinear form $B_{\pp}$ uniquely determined by the property $B_{\pp}(v_1\otimes1,v_2\otimes1)=B(v_1,v_2)$. Clearly we can identify $V$ as a $K$-subspace of $V_{\pp}$. For a shifted lattice $X$ in $V$, the localization $X_{\pp}$ is the shifted lattice in $V_{\pp}$ given by $L_{\pp}+\nu$, where $L_{\pp}$ is the $\oo_{\pp}$-submodule of $V_{\pp}$ generated by $L$. If $K$ is a totally real number field, a shifted lattice $X$ in a quadratic space $V$ is totally positive if the bilinear form $B_{\pp}$ is positive definite for any archimedean place $\pp\in\Omega$.

For a shifted lattice $X$ in $V$, the proper genus of $X$, denoted by $\pgenus(X)$, is defined to be the set of shifted lattices $Y$ in $V$ such that $Y_{\pp}\in\pclass(X_{\pp})$ for any place $\pp\in\Omega$. By \cite[Corollary 4.4]{chan2012representations}, the number of proper classes in the proper genus of $X$ is finite, which is called the proper class number of $X$, denoted by $h^{+}(X)$. Moreover, an element $n\in K$ is represented by a shifted lattice $Y\in\pgenus(X)$ if and only if $n$ is represented by $X_{\pp}$ at all places $\pp\in\Omega$ by \cite[Corollary 4.8]{chan2012representations}. Thus, the proper class number measures the obstruction of the local-global principle for a shifted lattice. 

To estimate the proper class number, we study the proper mass. From now on, we assume that
\begin{equation}
\label{eqn::assumption}
\dim(V)\geq3,\text{ or }\dim(V)=2\text{ and }\det(V)\not\in-(K^{\times})^2.
\end{equation}
Under the assumption (\ref{eqn::assumption}), we fix a normalized Tamagawa measure
\begin{equation}
\label{eqn::definemass}
\mu=\mu_{\infty}\prod_{\pp\in\Omega_f}\mu_{\pp}
\end{equation}
on the adelization $O_{\AA}^{+}(V)$. See \cite{tamagawa1966adeles} for a detailed definition. Let $O^{+}(V_{\infty})$ denote the archimedean component of $O_{\AA}^{+}(V)$. Then the proper mass of a shifted lattice $X$ is defined by
$$
\pmass(X)\coloneqq\sum_{\pclass(Y)\subseteq\pgenus(X)}\mu_{\infty}(O^{+}(V_{\infty})/O^{+}(Y)),
$$
where the sum runs over all proper classes in $\pgenus(X)$.

\begin{proposition}
\label{thm::massformula}
Let $V$ be a quadratic space over $K$ satisfying the assumption (\ref{eqn::assumption}). For any shifted lattice $X=L+\nu$ in $V$, we have
\begin{equation}
\label{eqn::massformula}
\pmass(X)=\pmass(L)\prod_{\pp\mid\mathfrak{M}_X}[O^{+}(L_{\pp}):O^{+}(X_{\pp})].
\end{equation}
\end{proposition}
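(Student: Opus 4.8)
The plan is to pass to the adele group and compare $X$ with its lattice part $L$ place by place. Write $G\coloneqq O_{\AA}^{+}(V)$, let $\Gamma\coloneqq O^{+}(V)$ be its group of rational points, embedded diagonally and discretely, and for a shifted lattice $Y=M+\omega$ in $V$ put $U_{Y}\coloneqq O^{+}(V_{\infty})\times\prod_{\pp\in\Omega_{f}}O^{+}(Y_{\pp})$. Each $O^{+}(Y_{\pp})$ is an open compact subgroup of $O^{+}(V_{\pp})$ (as argued below it is a finite-index subgroup of $O^{+}(M_{\pp})$) and coincides with $O^{+}(M_{\pp})=O^{+}(L_{\pp})$ for all but finitely many $\pp$, so $U_{Y}$ is an open subgroup of $G$, and $G$ acts transitively on $\pgenus(Y)$ with $U_{Y}$ the stabilizer of $Y$. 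The first step is thus the genus--double-coset dictionary of \cite[Section 4]{chan2012representations}: the set of proper classes contained in $\pgenus(X)$ is in bijection with $\Gamma\backslash G/U_{X}$ (so it is finite by \cite[Corollary 4.4]{chan2012representations}), and under this bijection the class attached to a representative $g$ is represented by the shifted lattice $Y_{g}$ with $Y_{g,\pp}=g_{\pp}(X_{\pp})$, whose proper orthogonal group is $O^{+}(Y_{g})=\Gamma\cap gU_{X}g^{-1}$. The same statements hold verbatim with $L$ in place of $X$.

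Next I would evaluate $\mu(\Gamma\backslash G)$ in two ways. Choosing representatives $g_{1},\dots,g_{h}$ for $\Gamma\backslash G/U_{X}$ and decomposing $G=\bigsqcup_{i}\Gamma g_{i}U_{X}$, the orbit map $u\mapsto\Gamma g_{i}u$ identifies $\Gamma\backslash\Gamma g_{i}U_{X}$ with $\Lambda_{i}\backslash\big(g_{i}U_{X}g_{i}^{-1}\big)$, where $\Lambda_{i}\coloneqq\Gamma\cap g_{i}U_{X}g_{i}^{-1}=O^{+}(Y_{i})$ and where I use that $\mu$ is invariant under conjugation because $O^{+}$ is unimodular. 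The conjugate $g_{i}U_{X}g_{i}^{-1}$ has archimedean factor $O^{+}(V_{\infty})$ and its finite part is a conjugate of $\prod_{\pp}O^{+}(X_{\pp})$, hence of the same finite-part volume. Since $\Lambda_{i}$ is discrete in $G$, injects into $O^{+}(V_{\infty})$ (an isometry of $V$ trivial at every archimedean place is trivial, as $V\hookrightarrow V_{\pp}$), and meets the compact finite part only in the identity, its projection to $O^{+}(V_{\infty})$ is a discrete copy of $O^{+}(Y_{i})$ on which $\Lambda_{i}$ acts freely, so the quotient factors as
\[
\mu\big(\Lambda_{i}\backslash(g_{i}U_{X}g_{i}^{-1})\big)=\mu_{\infty}\big(O^{+}(Y_{i})\backslash O^{+}(V_{\infty})\big)\cdot\prod_{\pp\in\Omega_{f}}\mu_{\pp}\big(O^{+}(X_{\pp})\big).
\]
Summing over $i$ and using unimodularity of $O^{+}(V_{\infty})$ to pass from $O^{+}(Y_{i})\backslash O^{+}(V_{\infty})$ to $O^{+}(V_{\infty})/O^{+}(Y_{i})$ gives
\[
\mu(\Gamma\backslash G)=\Big(\prod_{\pp\in\Omega_{f}}\mu_{\pp}\big(O^{+}(X_{\pp})\big)\Big)\cdot\pmass(X),
\]
and the identical computation with $L$ yields the same left-hand side with $O^{+}(L_{\pp})$ throughout.

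Dividing these two identities, the common (finite) factor $\mu(\Gamma\backslash G)$ cancels and one obtains
\[
\pmass(X)=\pmass(L)\prod_{\pp\in\Omega_{f}}\frac{\mu_{\pp}\big(O^{+}(L_{\pp})\big)}{\mu_{\pp}\big(O^{+}(X_{\pp})\big)}.
\]
It remains to identify the local factors. If $\sigma\in O^{+}(V_{\pp})$ satisfies $\sigma(L_{\pp}+\nu)=L_{\pp}+\nu$, then $\sigma(L_{\pp})=L_{\pp}+(\nu-\sigma\nu)$ is both a coset of the subgroup $L_{\pp}$ and a subgroup, hence equals $L_{\pp}$; thus $O^{+}(X_{\pp})=\{\sigma\in O^{+}(L_{\pp}):\sigma\nu-\nu\in L_{\pp}\}$ is a finite-index subgroup of $O^{+}(L_{\pp})$, equal to all of it exactly when $\nu\in L_{\pp}$, i.e. when $\pp\nmid\mathfrak{M}_X$. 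As the ratio of $\mu_{\pp}$-volumes of nested open compact subgroups is the group index, the product collapses to $\prod_{\pp\mid\mathfrak{M}_X}[O^{+}(L_{\pp}):O^{+}(X_{\pp})]$, which is \eqref{eqn::massformula}.

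The step that needs the most care is the measure-theoretic bookkeeping in the second paragraph when $V$ is indefinite, where $\Lambda_{i}=O^{+}(Y_{i})$ is an infinite discrete group: one must verify that $\Lambda_{i}\backslash(g_{i}U_{X}g_{i}^{-1})$ still has finite $\mu$-measure and splits cleanly as an archimedean covolume times a finite-part volume. This is precisely where the hypothesis \eqref{eqn::assumption} is used — it guarantees that $\Gamma$ has finite covolume in $G$ (for $\dim V=2$ the condition $\det(V)\notin-(K^{\times})^{2}$ is what makes the torus $O^{+}(V)$ anisotropic) and hence that the Tamagawa construction of \cite{tamagawa1966adeles} is available; the definite case is the familiar one where all $\Lambda_{i}$ are finite. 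Everything else is routine manipulation of Haar measures on unimodular groups, and the argument recovers \cite[Theorem 2.4]{sun2016class}.
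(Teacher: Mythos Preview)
Your argument is correct and self-contained, but it takes a longer route than the paper does. The paper's proof simply invokes \cite[Theorem 2.4]{sun2016class} as a black box to write $\pmass(X)=2\prod_{\pp\in\Omega_f}\mu_{\pp}(O^{+}(X_{\pp}))^{-1}$ (and likewise for $L$), then divides and collapses the product using $X_{\pp}=L_{\pp}$ for $\pp\nmid\mathfrak{M}_X$. You instead re-derive Sun's formula from scratch via the double-coset dictionary and the decomposition of $\mu(\Gamma\backslash G)$, which is precisely what you acknowledge in your final sentence. What your approach buys is independence from the cited reference and an explicit accounting of where assumption \eqref{eqn::assumption} enters (finite covolume of $\Gamma$); what the paper's approach buys is brevity, since all of the measure-theoretic bookkeeping you flag as delicate in the indefinite case is absorbed into the citation.

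Two minor remarks. First, your claim that $O^{+}(X_{\pp})=O^{+}(L_{\pp})$ \emph{exactly} when $\nu\in L_{\pp}$ is stronger than needed and not obviously true in the reverse direction; the proof only requires the forward implication $\pp\nmid\mathfrak{M}_X\Rightarrow X_{\pp}=L_{\pp}$, which is immediate from the definition of the conductor. Second, the phrase ``meets the compact finite part only in the identity'' is not literally accurate for the diagonal embedding, but what you actually use --- that the archimedean projection is injective on $\Lambda_i$ --- is correct and suffices for the factorization of the quotient measure.
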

\begin{proof}
Since the measure $\mu_{\pp}$ is invariant under translations, by using \cite[Theorem 2.4]{sun2016class}, we have
\begin{align*}
\pmass(X)=&~2\prod_{\pp\in\Omega_f}\mu_{\pp}(O^{+}(X_{\pp}))^{-1}=2\prod_{\pp\in\Omega_f}\mu_{\pp}(O^{+}(L_{\pp}))^{-1}[O^{+}(L_{\pp}):O^{+}(X_{\pp})]\\
=&~2\prod_{\pp\in\Omega_f}\mu_{\pp}(O^{+}(L_{\pp}))^{-1}\prod_{\pp\in\Omega_f}[O^{+}(L_{\pp}):O^{+}(X_{\pp})]=\pmass(L)\prod_{\pp\in\Omega_f}[O^{+}(L_{\pp}):O^{+}(X_{\pp})].
\end{align*}
If $\pp\nmid\mathfrak{M}_X$, there exists $a\in\oo_{\pp}^{\times}$ such that $a\nu\in L_{\pp}$ and thus we have $X_{\pp}=L_{\pp}$. So we have
$$
\pmass(X)=\pmass(L)\prod_{\pp\in\Omega_f}[O^{+}(L_{\pp}):O^{+}(X_{\pp})]=\pmass(L)\prod_{\pp\mid\mathfrak{M}_X}[O^{+}(L_{\pp}):O^{+}(X_{\pp})],
$$
as desired.
\end{proof}
\begin{remark}
If $X$ is totally positive, then the orthogonal group $O^{+}(Y)$ is finite for any shifted lattice $Y\in\pgenus(X)$. Therefore, one may alternatively define the proper mass of $X$ by
$$
\pmass(X)\coloneqq\sum_{\pclass(Y)\subseteq\pgenus(X)}|O^{+}(Y)|^{-1},
$$
which differs from the one in (\ref{eqn::definemass}) by a multiplication of $\mu_{\infty}(O^{+}(V_{\infty}))$. For this alternative definition, the formula (\ref{eqn::massformula}) holds as well.
\end{remark}

If $V$ is of dimension $2$, then the adelization $O_{\AA}^{+}(V)$ is commutative. Thus the mass formula gives a class number formula.

\begin{proposition}
\label{thm::classnumberformula}
Let $V$ be a quadratic space of dimension $2$ over $K$ such that $\det(V)\not\in-(K^{\times})^2$. For a shifted lattice $X=L+\nu$ in $V$, we have
$$
h^{+}(X)=\frac{h^{+}(L)}{[O^{+}(L):O^{+}(X)]}\prod_{\pp\mid\mathfrak{M}_X}[O^{+}(L_{\pp}):O^{+}(X_{\pp})].
$$
\end{proposition}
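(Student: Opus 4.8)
The plan is to deduce the formula directly from the mass formula of Proposition~\ref{thm::massformula}, using the commutativity of $O_{\AA}^{+}(V)$ to turn the (a priori class-dependent) archimedean volume factors in the definition of $\pmass$ into a single common factor. First I would recall the adelic description of the proper genus from \cite[Section~4]{chan2012representations}: the proper classes in $\pgenus(X)$ are in bijection with the orbits of $O^{+}(V)$ on $O_{\AA}^{+}(V)/O_{\AA}^{+}(X)$, i.e. with the double coset space $O^{+}(V)\backslash O_{\AA}^{+}(V)/O_{\AA}^{+}(X)$, where $O_{\AA}^{+}(X)$ is the stabilizer of $X$ in $O_{\AA}^{+}(V)$ (its finite part is $\prod_{\pp\in\Omega_f}O^{+}(X_{\pp})$, with $O^{+}(X_{\pp})=O^{+}(L_{\pp})$ for $\pp\nmid\mathfrak{M}_X$, and its archimedean part is all of $O^{+}(V_{\infty})$); the coset $O^{+}(V)\sigma O_{\AA}^{+}(X)$ corresponds to the class of a shifted lattice $Y$ with $Y_{\pp}=\sigma_{\pp}(X_{\pp})$ for every place $\pp$. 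Unwinding definitions, a proper isometry $\tau\in O^{+}(V)$ fixes $Y$ if and only if it fixes every $Y_{\pp}$, which (viewing $\tau$ diagonally) means exactly $\tau\in O^{+}(V)\cap\sigma O_{\AA}^{+}(X)\sigma^{-1}$; hence $O^{+}(Y)=O^{+}(V)\cap\sigma O_{\AA}^{+}(X)\sigma^{-1}$.

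The key step is to exploit that, since $\dim V=2$ and $\det(V)\not\in-(K^{\times})^2$, the group $O_{\AA}^{+}(V)$ is abelian, so $\sigma O_{\AA}^{+}(X)\sigma^{-1}=O_{\AA}^{+}(X)$ and therefore $O^{+}(Y)=O^{+}(V)\cap O_{\AA}^{+}(X)=O^{+}(X)$ for \emph{every} class $Y$ in $\pgenus(X)$. Consequently every summand $\mu_{\infty}(O^{+}(V_{\infty})/O^{+}(Y))$ in the definition of $\pmass(X)$ equals the single quantity $\mu_{\infty}(O^{+}(V_{\infty})/O^{+}(X))$, and summing over the $h^{+}(X)$ proper classes gives $\pmass(X)=h^{+}(X)\,\mu_{\infty}(O^{+}(V_{\infty})/O^{+}(X))$. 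Applying the identical argument to $L$, regarded as a shifted lattice with $\nu=0$, yields $\pmass(L)=h^{+}(L)\,\mu_{\infty}(O^{+}(V_{\infty})/O^{+}(L))$.

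To finish I would compare the two archimedean covolumes. A proper isometry preserving $X=L+\nu$ preserves its set of differences, which is $L$, so $O^{+}(X)\subseteq O^{+}(L)$; and since $O_{\AA}^{+}(X)$ has finite index $\prod_{\pp\mid\mathfrak{M}_X}[O^{+}(L_{\pp}):O^{+}(X_{\pp})]$ in $O_{\AA}^{+}(L)$, the index $[O^{+}(L):O^{+}(X)]=[O^{+}(V)\cap O_{\AA}^{+}(L):O^{+}(V)\cap O_{\AA}^{+}(X)]$ is finite and
$$
\mu_{\infty}\bigl(O^{+}(V_{\infty})/O^{+}(X)\bigr)=[O^{+}(L):O^{+}(X)]\cdot\mu_{\infty}\bigl(O^{+}(V_{\infty})/O^{+}(L)\bigr).
$$
Substituting the two mass expressions into (\ref{eqn::massformula}), cancelling the common nonzero factor $\mu_{\infty}(O^{+}(V_{\infty})/O^{+}(L))$, and dividing by $[O^{+}(L):O^{+}(X)]$ yields precisely the asserted identity.

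I expect the main obstacle to be justifying the measure-theoretic manipulations in the last two paragraphs rather than the group theory: one must know that $O^{+}(Y)$, embedded in $O^{+}(V_{\infty})$ via the archimedean places, is discrete, that $\mu_{\infty}$ assigns equal covolume to $O^{+}(Y)$ and $O^{+}(X)$ (here they are literally the same subgroup), and — in the indefinite or mixed-signature cases — that all these covolumes are finite and positive, so that the final cancellation is legitimate. These are exactly the facts that make the definition of $\pmass$ meaningful under the standing hypothesis; granting them, everything else reduces to bookkeeping with indices of open compact subgroups of $O_{\AA}^{+}(V)$.
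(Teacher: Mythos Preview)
Your proposal is correct and follows essentially the same route as the paper's proof: use commutativity of $O_{\AA}^{+}(V)$ to conclude $O^{+}(Y)=O^{+}(X)$ for every $Y\in\pgenus(X)$, collapse $\pmass(X)$ to $h^{+}(X)\,\mu_{\infty}(O^{+}(V_{\infty})/O^{+}(X))$ (and similarly for $L$), then compare via the finite index $[O^{+}(L):O^{+}(X)]$ and plug into Proposition~\ref{thm::massformula}. One small remark: the commutativity of $O_{\AA}^{+}(V)$ holds for any $2$-dimensional $V$ (the paper cites \cite[\S 43C]{o2013introduction}); the hypothesis $\det(V)\not\in-(K^{\times})^2$ is really there so that assumption~(\ref{eqn::assumption}) is satisfied and the Tamagawa measure, hence $\pmass$, is defined.
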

\begin{proof}
By \cite[\S 43C]{o2013introduction}, the adelization $O_{\AA}^{+}(V)$ is commutative. It follows that $O^{+}(Y)=O^{+}(X)$ for any shifted lattice $Y\in\pgenus(X)$. Thus, we have
$$
m^{+}(X)=\mu_{\infty}(O^{+}(V_{\infty})/O^{+}(X))h^{+}(X),
$$
and in particular
$$
m^{+}(L)=\mu_{\infty}(O^{+}(V_{\infty})/O^{+}(L))h^{+}(L).
$$
Since $O^{+}(X)$ is a subgroup of $O^{+}(L)$ of finite index, by using Proposition \ref{thm::massformula}, we have
$$
h^{+}(X)=\frac{h^{+}(L)}{[O^{+}(L):O^{+}(X)]}\prod_{\pp\mid\mathfrak{M}_X}[O^{+}(L_{\pp}):O^{+}(X_{\pp})],
$$
as desired.
\end{proof}

\subsection{Jordan canonical forms}
\label{sec::startlocal}

The discussion from now on is purely local until Section \ref{sec::main}. In order to ease the notations, we assume that $K$ is a finite extension over the $p$-adic field $\QQ_p$ for a prime number $p$. Denote by $\oo$, $\pp$, and $\pi$ the ring of integers, the unique prime ideal, and a uniformizer of $K$, respectively. The $\pp$-adic valuation $\ord_{\pp}\colon K\to\ZZ\cup\{\infty\}$ is normalized so that $\ord_{\pp}(\pi)=1$. Note that $\ord_{\pp}(0)\coloneqq\infty$ by convention. Furthermore, we set $e\coloneqq\ord_{\pp}(2)$. 

Take a lattice $L$ in a quadratic space of dimension $2$ over $K$. Let $B$ denote the bilinear form. Since $\oo$ is a principal ideal domain, the lattice $L$ is a free $\oo$-module of rank $2$. With respect to a basis $\{e_1,e_2\}$ of $L$, the Gram matrix $G$ of $L$ is the matrix given by
$$
G\coloneqq
\begin{pmatrix}
B(e_1,e_1) & B(e_1,e_2)\\
B(e_2,e_1) & B(e_2,e_2)
\end{pmatrix}.
$$
For a matrix $M\in\text{Mat}_{2\times2}(K)$, we write $L\simeq M$ if the Gram matrix of $L$ with respect to a basis is given by $M$. For $\alpha,\beta\in K$, we denote by $D(\alpha,\beta)$ the diagonal matrix with entries $\alpha,\beta$ and denote by
$$
A(\alpha,\beta)\coloneqq
\begin{pmatrix}
\alpha & 1\\
1 & \beta
\end{pmatrix}.
$$
In order to simplify the calculations of the group indices, we choose a suitable basis of $L$ so that the Gram matrix is as simple as possible. This is achieved in the following lemma. Note that the group index $[O^{+}(L)\colon O^{+}(X)]$ is invariant under scaling. So we may assume that $\mathfrak{s}_L\subseteq\oo$.

\begin{lemma}
\label{thm::threetypes}
Let $L$ be a lattice in a quadratic space of dimension $2$ over $K$ such that $\mathfrak{s}_L\subseteq\oo$. If $\pp\nmid2$, then we have $L\simeq cD(1,\alpha)$ with $c,\alpha\in\oo$. If $\pp\mid2$, then there exists a basis of $L$ such that $L$ is of one of the following types
\begin{enumerate}
\item $L\simeq cD(1,\alpha)$ with $c,\alpha\in\oo$,
\item $L\simeq cA(0,0)$ with $c\in\oo$,
\item $L\simeq cA(\alpha,\beta)$ with $c,\alpha,\beta\in\oo$ such that $1\leq\ord_{\pp}(\alpha)\leq\min(e,\ord_{\pp}(\beta))$.
\end{enumerate}
\end{lemma}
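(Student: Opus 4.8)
The plan is to reduce to the structure theory of binary quadratic lattices over local fields and then massage the resulting Jordan splitting into one of the three normal forms. First I would handle the non-dyadic case. When $\pp\nmid 2$, the standard theory (e.g.\ \cite[\S 91, \S 92]{o2013introduction}) gives a Jordan splitting $L\simeq \langle a_1\rangle\perp\langle a_2\rangle$ with each $a_i$ of the form $u_i\pi^{m_i}$, $u_i\in\oo^\times$, $m_i\geq 0$ (the nonnegativity being exactly the hypothesis $\mathfrak{s}_L\subseteq\oo$, since the scale is generated by the off-diagonal and twice-diagonal entries, and over a non-dyadic field this forces the diagonal entries into $\oo$ after we arrange the split). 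Setting $c\coloneqq u_1\pi^{m_1}$ and $\alpha\coloneqq (u_2/u_1)\pi^{m_2-m_1}$ is not quite right since $m_2-m_1$ could be negative; instead I would order the Jordan constituents so that $m_1\leq m_2$, rescale so that the unit part of the smallest constituent is absorbed, i.e.\ write $L\simeq u_1\pi^{m_1}D(1,\alpha)$ with $\alpha=(u_2/u_1)\pi^{m_2-m_1}\in\oo$, $c=u_1\pi^{m_1}\in\oo$. This gives type (1). Here I also need to check that scaling by a unit can turn $\langle u_1\rangle\perp\langle u_2\rangle$ into $\langle 1\rangle\perp\langle u_2/u_1\rangle$; this is just multiplying the whole form by $u_1^{-1}$, which is allowed because $\mathfrak{s}_L\subseteq\oo$ is preserved under unit scaling and the statement only claims existence of \emph{some} basis/form of this shape (scaling by $c=u_1\pi^{m_1}$ at the end restores the lattice).

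For the dyadic case I would again start from a Jordan splitting $L\simeq L_1\perp\cdots\perp L_t$ into modular components, but now each component has rank $1$ or $2$, since over a dyadic field not every binary lattice diagonalizes. After scaling so that the first (smallest-scale) constituent is unimodular, there are two cases according to whether $L$ itself is (after this scaling) a norm-maximal unimodular lattice or not. If the unimodular part diagonalizes, I get $L\simeq c\langle 1\rangle\perp c\langle\alpha\rangle = cD(1,\alpha)$ — again type (1), after checking the leading coefficient can be normalized to $1$ by unit scaling. If it does not diagonalize, the binary unimodular piece is $\simeq A(\alpha,\beta)$ for some $\alpha,\beta$ with $\ord_\pp(\alpha\beta-1)=0$ forced by unimodularity; the remaining work is to show that one can choose the basis of this $A(\alpha,\beta)$-piece so that $1\leq\ord_\pp(\alpha)\leq\min(e,\ord_\pp(\beta))$, unless $\alpha$ and $\beta$ are both units, in which case a further change of basis of the $2\times2$ block puts it into the shape $cA(0,0)$ (the hyperbolic/“even” unimodular plane). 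Concretely: given $A(\alpha,\beta)$ unimodular, if $\ord_\pp(\alpha)=0$ then $Q(e_1)=\alpha$ is a unit and I can complete $e_1$ to an orthogonal basis, landing back in type (1); so I may assume $\alpha\in\pp$, and symmetrically $\beta\in\pp$; if both are in $\pp$ then $\det=\alpha\beta-1$ is a unit, consistent with unimodularity, and replacing the basis by $e_1,\,e_2-\lfloor\cdot\rfloor e_1$ type operations lets me decrease $\ord_\pp(\alpha)$ and $\ord_\pp(\beta)$ — using that in a dyadic field $\ord_\pp(2)=e$ governs when $Q(xe_1+ye_2)=\alpha x^2+2xy+\beta y^2$ can be pushed to small valuation. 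Iterating and using that the norm group of a unimodular lattice is comparable with $2\oo$ (up to the essentially standard facts in \cite[\S 93]{o2013introduction}) yields $1\leq\ord_\pp(\alpha)\leq e$; then I reorder the pair $\{\alpha,\beta\}$ so that $\ord_\pp(\alpha)\leq\ord_\pp(\beta)$, which gives $\ord_\pp(\alpha)\leq\min(e,\ord_\pp(\beta))$. The case $\alpha=\beta=0$ exactly is the limiting situation $A(0,0)$, type (2).

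The main obstacle is the dyadic non-diagonalizable case: one must argue carefully that the base change operations on the binary unimodular block — which amount to the action of $\GL_2(\oo)$ by congruence on $A(\alpha,\beta)$ — can simultaneously be arranged to (i) keep the block unimodular, (ii) force $\ord_\pp(\alpha)\geq 1$ (this may fail, in which case one escapes to type (1)), and (iii) bound $\ord_\pp(\alpha)$ by $e$. Step (iii) is where the dyadic arithmetic genuinely enters: the point is that if $\ord_\pp(\alpha)>e$ and $\ord_\pp(\beta)\geq\ord_\pp(\alpha)$, then $Q(e_1+e_2)=\alpha+2+\beta$ would have valuation exactly $e$ and be “primitive enough’’ to serve as a new $e_1$, which either diagonalizes the block (back to type (1)) or lowers $\ord_\pp(\alpha)$. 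I would organize this as a descent on $\ord_\pp(\alpha)+\ord_\pp(\beta)$, splitting into the subcases $\ord_\pp(\alpha)=0$, $0<\ord_\pp(\alpha)\leq e$, and $\ord_\pp(\alpha)>e$, and in each either terminate in one of the three listed forms or strictly decrease the descent quantity. The remaining bookkeeping — that the leading scalar $c$ can always be taken in $\oo$ rather than in $K^\times$, which is just the hypothesis $\mathfrak{s}_L\subseteq\oo$ applied to the modular components — is routine.
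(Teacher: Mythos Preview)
Your approach is essentially the paper's: Jordan splitting gives either two rank-$1$ modular pieces (type~(1) immediately) or a single modular rank-$2$ block, which after scaling to unimodular is $\simeq A(\alpha,\beta)$ and is then handled by casework on $\ord_\pp(\alpha)$, with $\ord_\pp(\alpha)=0$ forcing diagonalization. The only substantive difference is the subcase $\ord_\pp(\alpha)>e$: the paper simply invokes \cite[93:11]{o2013introduction}, which says such a lattice is $A(0,0)$ or $A(2,2\rho)$ with $\rho\in\oo^\times$, whereas you do it by hand via $e_1\mapsto e_1+e_2$. That move works and in fact lands $\ord_\pp(\alpha')$ at exactly $e$ in one step, so your ``descent on $\ord_\pp(\alpha)+\ord_\pp(\beta)$'' is not an iteration but a single base change; the ``either diagonalizes'' alternative never occurs since $e\geq1$. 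One small clean-up: the passage about ``$e_1,\,e_2-\lfloor\cdot\rfloor e_1$ type operations'' decreasing $\ord_\pp(\alpha)$ and $\ord_\pp(\beta)$ is misleading---such shears leave $Q(e_1)=\alpha$ untouched---so drop it in favor of the $e_1+e_2$ trick you already give.
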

\begin{proof}
By \cite[\S 91C]{o2013introduction}, we have either $L=L_1\perp L_2$ with $L_1$ and $L_2$ modular lattices of rank $1$, or the lattice $L$ itself is modular. In the first case, we already have a basis with respect to which the Gram matrix is of the form $cD(1,\alpha)$ with $c,\alpha\in\oo$. So we assume that the second case holds henceforth. By a scaling, we may further assume that $L$ is unimodular. If $\pp\nmid2$, then we have $L\simeq D(1,\alpha)$ with $\alpha\in\oo^{\times}$ by \cite[92:1]{o2013introduction}. If $\pp\mid2$, then we have $L\simeq A(\alpha,\beta)$ with $\alpha,\beta\in\oo$ by using \cite[93:10]{o2013introduction}. Moreover, we may assume that $\ord_{\pp}(\alpha)\leq\ord_{\pp}(\beta)$ by symmetry. This case splits into four subcases. Firstly, if $\ord_{\pp}(\alpha)=\infty$, then $L\simeq A(0,0)$. Secondly, if $\ord_{\pp}(\alpha)=0$, then $L\simeq cD(1,\gamma)$ for $c,\gamma\in\oo$ by eliminating off-diagonal entries. Thirdly, if $\ord_{\pp}(\alpha)\geq e$, then either $L\simeq A(0,0)$ or $L\simeq A(2,2\rho)$ with $\rho\in\oo^{\times}$ by \cite[93:11]{o2013introduction}, which is of the third type. For the remaining cases, it is clear that $L$ is of the third type as well. This finishes the proof.
\end{proof}

\subsection{Some lemmas}

In this section, we keep using the local setting given in Section \ref{sec::startlocal}. To calculate the group indices, we have to prove some lemmas counting solutions to certain systems of congruences. For $x\in\oo$, we set
\begin{equation}
\label{eqn::defineeta}
\eta(x)\coloneqq
\begin{dcases}
1,&\text{ if }x\in\oo^{\times},x\equiv y^2\pmod{\pp}\text{ for some }y\in\oo^{\times},\\
-1,&\text{ if }x\in\oo^{\times},x\not\equiv y^2\pmod{\pp}\text{ for any }y\in\oo^{\times},\\
-\norm(\pp),&\text{ if }x\in\pp,
\end{dcases}
\end{equation}
where we denote by $\norm=\norm_{K/\QQ_p}$ the norm of the extension $K/\QQ_p$, and therefore we have $N(\pp)=p^f$ with $f$ being the inertial degree of the extension $K/\QQ_p$.

\begin{lemma}
\label{thm::system0}
Assume that $\pp\nmid2$. For $\alpha\in\oo$ and $t\geq1$, the number of solutions $(x_1,x_2)$ modulo $\pp^t$ to the congruence 
\begin{equation}
\label{eqn::system0}
x_1^2+\alpha x_2^2\equiv1\pmod{\pp^t}
\end{equation}
is $\norm(\pp)^t(1-\eta(-\alpha)\norm(\pp)^{-1})$.
\end{lemma}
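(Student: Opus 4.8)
The plan is to count the pairs $(x_1,x_2)$ modulo $\pp^t$ satisfying $x_1^2+\alpha x_2^2\equiv1\pmod{\pp^t}$ by stratifying according to the value of $x_2$ modulo $\pp^t$ and, for each fixed $x_2$, counting the number of $x_1$ with $x_1^2\equiv 1-\alpha x_2^2\pmod{\pp^t}$. Since $\pp\nmid2$, Hensel's lemma governs square roots: for a unit $u\in\oo_\pp^\times$, the congruence $x_1^2\equiv u\pmod{\pp^t}$ has exactly $1+\legendre{u}{\pp}$ solutions modulo $\pp^t$ (two if $u$ is a square unit, zero otherwise), where $\legendre{u}{\pp}$ depends only on $u\bmod\pp$; and for $u\in\pp$ one must argue separately. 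So the main bookkeeping is to sum $1+\legendre{1-\alpha x_2^2}{\pp}$ over $x_2$, handling the locus where $1-\alpha x_2^2$ is not a unit.

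First I would dispose of the case $\alpha\in\pp$, i.e.\ $\eta(-\alpha)=-\norm(\pp)$. Then $1-\alpha x_2^2$ is a unit for every $x_2$, and it is a square modulo $\pp$ (indeed $\equiv1$), so for each of the $\norm(\pp)^t$ choices of $x_2$ there are exactly $2$ choices of $x_1$ — wait, that would give $2\norm(\pp)^t$, so instead one argues more carefully via lifting the single equation $x_1^2 \equiv 1-\alpha x_2^2$: the count of solutions to $x^2\equiv c\pmod{\pp^t}$ with $c$ a square unit is exactly $2$, giving $2\norm(\pp)^t$; reconciling this with the target $\norm(\pp)^t(1+\norm(\pp)^{-1})=\norm(\pp)^t+\norm(\pp)^{t-1}$ shows the stratification must instead be done the other way. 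I would therefore fix $x_1$ and count $x_2$: for $\alpha\in\pp$, $x_2^2\equiv\alpha^{-1}(x_1^2-1)$; when $x_1^2-1$ is a unit this is unsolvable unless $\ord_\pp(\alpha)$ is even and the quotient is a square, and when $x_1\equiv\pm1$ one gets a lower-order contribution. The cleanest uniform route, which I would adopt, is to use the multiplicativity of the Gauss-sum / character count: write the number of solutions as $\sum_{x_1,x_2\bmod\pp^t}\frac1{\norm(\pp)^t}\sum_{\chi}\chi(x_1^2+\alpha x_2^2-1)$ after reducing to a single additive character, or — more elementarily — invoke the classical formula that over the finite ring $\oo/\pp^t$ the number of representations of a unit by the binary form $x_1^2+\alpha x_2^2$ with $\alpha\in\oo_\pp^\times$ is $\norm(\pp)^{t-1}(\norm(\pp)-\legendre{-\alpha}{\pp})$, proved by reduction mod $\pp$ together with a Hensel lifting argument that each mod-$\pp$ solution with $(x_1,x_2)\not\equiv(0,0)$ lifts in exactly $\norm(\pp)^{t-1}$ ways (the gradient of the form is nonvanishing there since $\pp\nmid2$).

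Concretely the key steps, in order, are: (i) reduce modulo $\pp$ and count solutions of $x_1^2+\alpha x_2^2\equiv1\pmod\pp$ over the finite field $\oo/\pp$ of size $\norm(\pp)$ — this is a standard conic count giving $\norm(\pp)-\legendre{-\alpha}{\pp}$ when $\alpha$ is a unit, and $\norm(\pp)-\legendre{-\alpha}{\pp}$ again if one sets the symbol to $1$ for $\alpha\in\pp$ (since then the conic degenerates to $x_1^2\equiv1$, i.e.\ $2$ points over each value, but as a count of points on the affine line it is $2\cdot\norm(\pp)^{?}$ — here I would just compute the degenerate case directly and check it matches $\eta$); (ii) observe that at every mod-$\pp$ solution the polynomial $f(x_1,x_2)=x_1^2+\alpha x_2^2-1$ has a unit partial derivative (namely $2x_1$ or $2\alpha x_2$; since $\pp\nmid2$ and a solution with $x_1\equiv x_2\equiv0$ is impossible as $f(0,0)=-1$, at least one is a unit unless $\alpha\in\pp$, in which case $2x_1$ is a unit because $x_1^2\equiv1$); (iii) apply Hensel's lemma in two variables to conclude each such solution lifts to exactly $\norm(\pp)^{t-1}$ solutions modulo $\pp^t$; (iv) multiply to get $\norm(\pp)^{t-1}\big(\norm(\pp)-\eta(-\alpha)\big)=\norm(\pp)^t\big(1-\eta(-\alpha)\norm(\pp)^{-1}\big)$, matching the three cases of the definition of $\eta$ in (\ref{eqn::defineeta}).

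The main obstacle I anticipate is bookkeeping the degenerate case $\alpha\in\pp$ so that it produces exactly the value $\eta(-\alpha)=-\norm(\pp)$ predicted by the formula: there the conic is a double line, so the naive mod-$\pp$ point count is not $\norm(\pp)-(-\norm(\pp))$ and one cannot blindly apply the smooth-conic count. I would handle it by a direct induction on $t$: given a solution $(x_1,x_2)$ mod $\pp^{t-1}$ with $\alpha\in\pp$, lift $x_1$ (using that $x_1$ is forced to be a unit) — each lift of $x_1$ mod $\pp^t$ is uniquely determined by the square-root condition up to sign, while $x_2$ ranges freely over the $\norm(\pp)$ classes above its value, so the count multiplies by $\norm(\pp)$ at each step after the base case $t=1$, and the base case $x_1^2\equiv1\pmod\pp$, $x_2$ free gives $2\norm(\pp)$ — hence $2\norm(\pp)^t$, which indeed equals $\norm(\pp)^t(1+\norm(\pp)^{-1}\cdot\norm(\pp))=\norm(\pp)^t(1-\eta(-\alpha)\norm(\pp)^{-1})$ since $\eta(-\alpha)=-\norm(\pp)$. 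Once this case and the unit case are both verified, the lemma follows.
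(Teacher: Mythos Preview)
Your final plan---steps (i)--(iv) together with the separate treatment of the degenerate case $\alpha\in\pp$---is correct and is exactly the paper's proof: count solutions modulo $\pp$ (the paper cites \cite[Lemma 1.3.2]{kitaoka1999arithmetic} for $\alpha\in\oo^\times$ and calls the case $\alpha\in\pp$ ``obvious''), then lift each one to $\norm(\pp)^{t-1}$ solutions modulo $\pp^t$ by Hensel. One remark on your exploratory detour: your very first computation for $\alpha\in\pp$, namely $2\norm(\pp)^t$ solutions, was already the right answer, since $\eta(-\alpha)=-\norm(\pp)$ gives target $\norm(\pp)^t(1-(-\norm(\pp))\norm(\pp)^{-1})=2\norm(\pp)^t$; the expression $\norm(\pp)^t(1+\norm(\pp)^{-1})$ you momentarily compared against corresponds to $\eta(-\alpha)=-1$, not to $\alpha\in\pp$.
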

\begin{proof}
The number of solutions modulo $\pp$ to the congruence 
$$
x_1^2+\alpha x_2^2\equiv1\pmod{\pp}
$$
is $\norm(\pp)(1-\eta(-\alpha)\norm(\pp)^{-1})$. In fact, it follows from \cite[Lemma 1.3.2]{kitaoka1999arithmetic} if $\alpha\in\oo^{\times}$. If $\alpha\in\pp$, then it is obvious. Since each solution modulo $\pp$ lifts to $\norm(\pp)^{t-1}$ solution modulo $\pp^t$ by Hensel's lemma, we see that the number of solutions to the congruence (\ref{eqn::system0}) is $\norm(\pp)^t(1-\eta(-\alpha)\norm(\pp)^{-1})$.
\end{proof}

The following lemma is in essence a consequence of Hensel's lemma. We reformulate it to unify the discussions of non-dyadic cases and dyadic cases.

\begin{lemma}
\label{thm::system1}
Suppose that $\alpha,s\in\oo$. We set $\mathfrak{t}_s\coloneqq\ord_{\pp}(s)$ and set $\mathfrak{t}_{\nu}\coloneqq\ord_{\pp}(s^2+\alpha)$. Assume that 
\begin{enumerate}
\item $\mathfrak{t}_m\geq e+1$;
\item $\mathfrak{t}_m+\mathfrak{t}_{\nu}-2\min(\mathfrak{t}_m,\mathfrak{t}_{\nu})\geq e$ if $\mathfrak{t}_s\geq\mathfrak{t}_{\nu}$;
\item $\mathfrak{t}_m+\mathfrak{t}_{\nu}-\min(\mathfrak{t}_m,\mathfrak{t}_{\nu})\geq\mathfrak{t}_s+e+1$ if $\mathfrak{t}_s<\mathfrak{t}_{\nu}$.
\end{enumerate}
For $t\geq\mathfrak{t}_m$, the number of solutions $(x_1,x_2)$ modulo $\pp^t$ to the system of congruences given by
\begin{equation}
\label{eqn::system1}
\begin{dcases}
x_1^2+\alpha x_2^2\equiv1\pmod{\pp^{t+e}},\\
x_1\equiv1-sx_2\pmod{\pp^{\mathfrak{t}_m}},\\
x_2\equiv0\pmod{\pp^{\mathfrak{t}_m-\mathfrak{t}_{\nu}}},
\end{dcases}
\end{equation}
is $\norm(\pp)^{t-\mathfrak{t}_m+\min(\mathfrak{t}_m,\mathfrak{t}_{\nu},\mathfrak{t}_s)}$.
\end{lemma}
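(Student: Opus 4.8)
The plan is to eliminate $x_1$ and the forced divisibility of $x_2$ by an explicit substitution, reduce the system to a single quadratic congruence in two variables, strip off its content, and finish by Hensel's lemma in one variable. Set $\sigma\coloneqq\min(\mathfrak{t}_m,\mathfrak{t}_{\nu})$, so the third congruence of \eqref{eqn::system1} reads $x_2\equiv0\pmod{\pp^{\mathfrak{t}_m-\sigma}}$ and is vacuous exactly when $\mathfrak{t}_m<\mathfrak{t}_{\nu}$. The pairs $(x_1,x_2)$ modulo $\pp^t$ satisfying the last two congruences are then in bijection, via $x_2=\pi^{\mathfrak{t}_m-\sigma}y$ and $x_1=1-sx_2+\pi^{\mathfrak{t}_m}z$, with $(y,z)\in\oo/\pp^{t-\mathfrak{t}_m+\sigma}\times\oo/\pp^{t-\mathfrak{t}_m}$. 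Substituting into the first congruence and using $\ord_{\pp}(s^2+\alpha)=\mathfrak{t}_{\nu}$, it becomes $F(y,z)\equiv0\pmod{\pp^{t+e}}$, where
$$
F(y,z)=-2s\pi^{\mathfrak{t}_m-\sigma}y+(s^2+\alpha)\pi^{2(\mathfrak{t}_m-\sigma)}y^2+2\pi^{\mathfrak{t}_m}z-2s\pi^{2\mathfrak{t}_m-\sigma}yz+\pi^{2\mathfrak{t}_m}z^2,
$$
whose coefficients of $y$, $y^2$, $z$, $yz$, $z^2$ have $\pp$-valuations $a\coloneqq e+\mathfrak{t}_s+\mathfrak{t}_m-\sigma$, $\mathfrak{t}_{\nu}+2(\mathfrak{t}_m-\sigma)$, $c\coloneqq e+\mathfrak{t}_m$, $a+\mathfrak{t}_m$, and $2\mathfrak{t}_m$ respectively (the $y^2$-term being absent if $s^2+\alpha=0$).

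Next I would compute the content of $F$. Using $\ord_{\pp}(s)=\mathfrak{t}_s$, $\ord_{\pp}(s^2+\alpha)=\mathfrak{t}_{\nu}$, hypothesis (1) (i.e. $\mathfrak{t}_m\geq e+1$), and whichever of hypotheses (2), (3) is in force, one shows that all five valuations above are at least $\lambda\coloneqq\min(a,c)=e+\mathfrak{t}_m-\max(0,\sigma-\mathfrak{t}_s)$, so that $F=\pi^{\lambda}G$ with $G\in\oo[y,z]$ and the first congruence is equivalent to $G(y,z)\equiv0\pmod{\pp^{N}}$ with $N\coloneqq t+e-\lambda=t-\mathfrak{t}_m+\max(0,\sigma-\mathfrak{t}_s)$. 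The point of the hypotheses is precisely that the linear part of $G$ is primitive: the coefficient of $z$ in $G$ is a unit when $\mathfrak{t}_s\geq\sigma$, the coefficient of $y$ in $G$ is a unit when $\mathfrak{t}_s\leq\sigma$, and in either case the coefficients of the remaining monomials of $G$ involving the variable one solves for lie in $\pp$; hence the relevant partial derivative of $G$ is a unit at every point of $\oo^2$.

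I would then finish by Hensel's lemma in one variable, in two cases. If $\mathfrak{t}_s\geq\sigma$, then $N=t-\mathfrak{t}_m$ and, since $\partial_z G\in\oo^\times$ everywhere, for each $y$ there is a unique $z\in\oo/\pp^{t-\mathfrak{t}_m}$ with $G\equiv0\pmod{\pp^{t-\mathfrak{t}_m}}$; as $\sigma=\min(\mathfrak{t}_m,\mathfrak{t}_{\nu},\mathfrak{t}_s)$ in this case, the number of solutions is $\norm(\pp)^{t-\mathfrak{t}_m+\sigma}=\norm(\pp)^{t-\mathfrak{t}_m+\min(\mathfrak{t}_m,\mathfrak{t}_{\nu},\mathfrak{t}_s)}$. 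If $\mathfrak{t}_s<\sigma$, then $N=t-\mathfrak{t}_m+\sigma-\mathfrak{t}_s>t-\mathfrak{t}_m$; here one first checks that every $z$-involving monomial of $G$ has coefficient of valuation at least $\sigma-\mathfrak{t}_s$, so $G\bmod\pp^{N}$ depends on $z$ only through $z\bmod\pp^{t-\mathfrak{t}_m}$, the range of $z$. Then for each of the $\norm(\pp)^{t-\mathfrak{t}_m}$ values of $z$ there is a unique $y\bmod\pp^{N}$ solving the congruence, lifting to $\norm(\pp)^{\mathfrak{t}_s}$ values $y\in\oo/\pp^{t-\mathfrak{t}_m+\sigma}$; since $\mathfrak{t}_s=\min(\mathfrak{t}_m,\mathfrak{t}_{\nu},\mathfrak{t}_s)$ here, the count is again $\norm(\pp)^{t-\mathfrak{t}_m+\min(\mathfrak{t}_m,\mathfrak{t}_{\nu},\mathfrak{t}_s)}$.

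The main obstacle is the valuation bookkeeping in the second step: one must verify the bound $\lambda$ for all coefficients of $F$ uniformly across the regimes $\mathfrak{t}_m\gtrless\mathfrak{t}_{\nu}$ and $\mathfrak{t}_s\gtrless\sigma$ and in the degenerate cases $\mathfrak{t}_{\nu}=\infty$ (equivalently $\alpha=-s^2$, the split conic) and $\mathfrak{t}_s=\infty$ (equivalently $s=0$), and see that hypotheses (2) and (3) are exactly what is needed — no stronger, no weaker. The only other point requiring care is the compatibility observation in the case $\mathfrak{t}_s<\sigma$, that reducing $G$ modulo $\pp^{N}$ does not see $z$ beyond $\pp^{t-\mathfrak{t}_m}$; everything else — the bijectivity of the substitution, and the one-variable Hensel step, which applies because the pertinent partial derivative is a unit at every point rather than only near an approximate root — is routine.
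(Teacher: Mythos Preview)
Your argument is correct and reaches the same count, but it is organized differently from the paper's proof. The paper first squares the second congruence (using $\mathfrak{t}_m\geq e+1$) to deduce the auxiliary constraint $2sx_2-(s^2+\alpha)x_2^2\equiv0\pmod{\pp^{\mathfrak{t}_m+e}}$, from which it shows that any solution must have $x_2\in\pp^{\mathfrak{t}_m-\mathfrak{t}_s}\cap\pp^{\mathfrak{t}_m-\mathfrak{t}_\nu}$; once this is in hand, $sx_2\in\pp^{\mathfrak{t}_m}$ forces $x_1\equiv1\pmod{\pp^{\mathfrak{t}_m}}$, and the paper finishes by writing $x_1=1+\pi^{\mathfrak{t}_m}\sum_i\pi^iy_i$ and solving for the digits $y_0,y_1,\ldots$ one at a time. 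The count is then simply the number of admissible $x_2$.

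By contrast, you absorb the last two congruences into a change of variables $(x_1,x_2)\mapsto(y,z)$ at the outset, compute the content $\pi^\lambda$ of the resulting polynomial $F$, and then split on whether $\mathfrak{t}_s\gtrless\sigma$ to decide which of $\partial_yG$, $\partial_zG$ is a unit and invoke a one-variable Hensel/implicit-function step. This is arguably more uniform: the two regimes are treated symmetrically, and the role of hypotheses (2)--(3) is made transparent as exactly the inequalities needed for the content bound and for the relevant partial derivative to be a unit. The paper's route is more concrete and avoids your second-case compatibility check (that $G\bmod\pp^N$ depends on $z$ only through $z\bmod\pp^{t-\mathfrak{t}_m}$), at the cost of the separate preliminary ``unsolvability'' argument. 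Both proofs use the hypotheses in essentially the same places; your packaging simply front-loads the valuation bookkeeping.
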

\begin{proof}
Suppose that $(x_1,x_2)$ is a solution to (\ref{eqn::system1}). Using the assumption (1), we see that
$$
x_1^2\equiv 1-2sx_2+s^2x_2^2\pmod{\pp^{\mathfrak{t}_m+e}}.
$$
Combining with the first congruence in (\ref{eqn::system1}), we see that
\begin{equation}
\label{eqn::preprerelation}
2sx_2-(s^2+\alpha)x_2^2\equiv0\pmod{\pp^{\mathfrak{t}_m+e}}.
\end{equation}

Now we want to show that the system is unsolvable if $x_2\not\in\pp^{\mathfrak{t}_m-\mathfrak{t}_s}$. First, we note that the third congruence in (\ref{eqn::system1}) implies that 
$$
\ord_{\pp}(x_2)\geq\min(\mathfrak{t}_m-\mathfrak{t}_{\nu},0)=\mathfrak{t}_m-\min(\mathfrak{t}_m,\mathfrak{t}_{\nu}),
$$
since $x_2\in\oo$. If $\mathfrak{t}_s\geq\mathfrak{t}_{\nu}$, by using assumption (2) and the third congruence in (\ref{eqn::system1}), we have
$$
\ord_{\pp}((s^2+\alpha)x_2^2)\geq2\mathfrak{t}_m+\mathfrak{t}_{\nu}-2\min(\mathfrak{t}_m,\mathfrak{t}_{\nu})\geq\mathfrak{t}_m+e.
$$
Thus (\ref{eqn::preprerelation}) is unsolvable if $x_2\not\in\pp^{\mathfrak{t}_m-\mathfrak{t}_s}$. If $\mathfrak{t}_s<\mathfrak{t}_{\nu}$, by using assumption (3) and again the third congruence in (\ref{eqn::system1}), we have
$$
\ord_{\pp}((s^2+\alpha)x_2^2)\geq\mathfrak{t}_m+\mathfrak{t}_{\nu}-\min(\mathfrak{t}_m,\mathfrak{t}_{\nu})+\ord_{\pp}(x_2)\geq\mathfrak{t}_s+e+1+\ord_{\pp}(x_2)>\ord_{\pp}(2sx_2).
$$
Therefore we see that (\ref{eqn::preprerelation}) is unsolvable if $x_2\not\in\pp^{\mathfrak{t}_m-\mathfrak{t}_s}$. So it suffices to solve the system when $x_2\in\pp^{\mathfrak{t}_m-\mathfrak{t}_s}\cap\pp^{\mathfrak{t}_m-\mathfrak{t}_{\nu}}$.

Suppose that $x_2\in\pp^{\mathfrak{t}_m-\mathfrak{t}_s}\cap\pp^{\mathfrak{t}_m-\mathfrak{t}_{\nu}}$. We count the number of solutions to the system (\ref{eqn::system1}). First, we notice that $sx_2\in\pp^{\mathfrak{t}_m}$. Therefore, we have $x_1\equiv1\pmod{\pp^{\mathfrak{t}_m}}$ by the second congruence in (\ref{eqn::system1}). We write
$$
x_1=1+\pi^{\mathfrak{t}_m}\sum_{i\geq0}^{\infty}\pi^iy_i,\quad z_k\coloneqq1+\pi^{\mathfrak{t}_m}\sum_{i\geq0}^{k-1}\pi^iy_i,
$$
with $y_0,y_1,\ldots\in\oo/\pp$ for $k\geq0$. Using the assumption $\mathfrak{t}_m\geq e+1$, we see that
\begin{equation}
\label{eqn::goingup}
x_1^2\equiv z_k^2+2\pi^{\mathfrak{t}_m+k}y_k\pmod{\pp^{\mathfrak{t}_m+e+k+1}},
\end{equation}
for $k\geq0$. Therefore, the system (\ref{eqn::system1}) is equivalent to the following system
\begin{equation}
\label{eqn::system1alter}
\begin{dcases}
2\pi^{\mathfrak{t}_m}y_0\equiv1-z_0^2-\alpha x_2^2\pmod{\pp^{\mathfrak{t}_m+e+1}},\\
2\pi^{\mathfrak{t}_m+1}y_1\equiv1-z_1^2-\alpha x_2^2\pmod{\pp^{\mathfrak{t}_m+e+2}},\\
\quad\quad\quad\vdots\\
2\pi^{t-1}y_{t-\mathfrak{t}_m-1}\equiv1-z_{t-\mathfrak{t}_m-1}^2-\alpha x_2^2\pmod{\pp^{t+e}},\\
x_2\equiv0\pmod{\pp^{\mathfrak{t}_m-\mathfrak{t}_s}\cap\pp^{\mathfrak{t}_m-\mathfrak{t}_{\nu}}}.
\end{dcases}
\end{equation}
Using the assumption $x_2\in\pp^{\mathfrak{t}_m-\mathfrak{t}_s}\cap\pp^{\mathfrak{t}_m-\mathfrak{t}_{\nu}}$, we see that $1-z_0^2-\alpha x_2^2\in\pp^{\mathfrak{t}_m+e}$. Thus the first congruence in (\ref{eqn::system1alter}) has a unique solution $y_0\in\oo/\pp$. If the first congruence in (\ref{eqn::system1alter}) is consistent, we have $x_1^2+\alpha x_2^2\equiv1\pmod{\pp^{\mathfrak{t}_m+e+1}}$. Combining with (\ref{eqn::goingup}), we see that
$$
1-z_1^2-\alpha x_2^2\equiv1-x_1^2-\alpha x_2^2\equiv0\pmod{\pp^{\mathfrak{t}_m+e+1}}.
$$
Thus, the second congruence in (\ref{eqn::system1alter}) has a unique solution $y_1\in\oo/\pp$. Repeating this argument, we see that $y_0,y_1,\ldots,y_{t-\mathfrak{t}_m-1}$ are uniquely determined by $x_2$. Therefore $x_2$ uniquely determines $x_1$ modulo $\pp^t$. Hence, the number of solutions to the system (\ref{eqn::system1alter}) is the number of elements $x_2\in\oo/\pp^t$ such that $x_2\in\pp^{\mathfrak{t}_m-\mathfrak{t}_s}\cap\pp^{\mathfrak{t}_m-\mathfrak{t}_{\nu}}$, which is $\norm(\pp)^{t-\mathfrak{t}_m+\min(\mathfrak{t}_m,\mathfrak{t}_s,\mathfrak{t}_{\nu})}$.
\end{proof}

\begin{lemma}
\label{thm::system2}
Suppose that $\alpha,\beta\in\oo$ with $1\leq\mathfrak{t}_{\alpha}\coloneqq\ord_{\pp}(\alpha)\leq\min(e,\ord_{\pp}(\beta))$. Assume that $\mathfrak{t}_m\geq e+1$. For $t\geq\mathfrak{t}_m+\mathfrak{t}_{\alpha}$, the number of solutions $(x_1,x_2)$ modulo $\pp^t$ to the system of congruences given by
\begin{equation}
\label{eqn::system2}
\begin{dcases}
x_1^2+\frac{2x_1x_2}{\alpha}+\frac{\beta x_2^2}{\alpha}\equiv1\pmod{\pp^{t+e-\mathfrak{t}_{\alpha}}},\\
x_1\equiv1\pmod{\pp^{\mathfrak{t}_m}},\\
x_2\equiv0\pmod{\pp^{\mathfrak{t}_m}},
\end{dcases}
\end{equation}
is $\norm(\pp)^{t-\mathfrak{t}_m}$.
\end{lemma}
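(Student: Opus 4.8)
The plan is to argue as in the proof of Lemma~\ref{thm::system1}: regard $x_2$ as fixed, solve for $x_1$ via Hensel's lemma, and then count the admissible pairs. The last two congruences of (\ref{eqn::system2}) force $x_1=1+\pi^{\mathfrak{t}_m}u$ with $u\in\oo$ and $x_2\in\pp^{\mathfrak{t}_m}$, and a direct expansion gives
$$
x_1^2+\frac{2x_1x_2}{\alpha}+\frac{\beta x_2^2}{\alpha}-1=2\pi^{\mathfrak{t}_m}\Bigl(1+\frac{x_2}{\alpha}\Bigr)u+\pi^{2\mathfrak{t}_m}u^2+\frac{\beta x_2^2}{\alpha}+\frac{2x_2}{\alpha}.
$$
Here $2/\alpha$ and $\beta/\alpha$ lie in $\oo$ because $\mathfrak{t}_{\alpha}\leq\min(e,\ord_{\pp}(\beta))$, and $1+x_2/\alpha\in\oo^{\times}$ because $\mathfrak{t}_m\geq e+1>\mathfrak{t}_{\alpha}$ forces $x_2/\alpha\in\pp$. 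Using $\mathfrak{t}_m\geq e+1$, one checks that $2\pi^{\mathfrak{t}_m}(1+x_2/\alpha)u$, $\pi^{2\mathfrak{t}_m}u^2$, and $\beta x_2^2/\alpha$ all lie in $\pp^{\mathfrak{t}_m+e}$, so the right-hand side is congruent to $2x_2/\alpha$ modulo $\pp^{\mathfrak{t}_m+e}$.

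Since $t\geq\mathfrak{t}_m+\mathfrak{t}_{\alpha}$, the modulus $\pp^{t+e-\mathfrak{t}_{\alpha}}$ of the first congruence of (\ref{eqn::system2}) is contained in $\pp^{\mathfrak{t}_m+e}$, so that congruence forces $2x_2/\alpha\equiv0\pmod{\pp^{\mathfrak{t}_m+e}}$; as $\ord_{\pp}(2/\alpha)=e-\mathfrak{t}_{\alpha}$, this says precisely that $x_2\in\pp^{\mathfrak{t}_m+\mathfrak{t}_{\alpha}}$. Conversely, assuming $x_2\in\pp^{\mathfrak{t}_m+\mathfrak{t}_{\alpha}}$, I would rewrite the displayed quantity as $\pi^{\mathfrak{t}_m+e}\bigl(\varepsilon u+\pi^{\mathfrak{t}_m-e}u^2+c_1+c_2\bigr)$ with $\varepsilon\in\oo^{\times}$ and $c_1=2x_2/(\alpha\pi^{\mathfrak{t}_m+e})$, $c_2=\beta x_2^2/(\alpha\pi^{\mathfrak{t}_m+e})$ both in $\oo$ (this uses $\mathfrak{t}_{\alpha}\leq\ord_{\pp}(\beta)$, $\ord_{\pp}(x_2)\geq\mathfrak{t}_m+\mathfrak{t}_{\alpha}$, and $\mathfrak{t}_m\geq e+1$). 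Dividing the first congruence by $\pi^{\mathfrak{t}_m+e}$ then turns it into
$$
\pi^{\mathfrak{t}_m-e}u^2+\varepsilon u+c_1+c_2\equiv0\pmod{\pp^{t-\mathfrak{t}_m-\mathfrak{t}_{\alpha}}}.
$$

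For each fixed $x_2\in\pp^{\mathfrak{t}_m+\mathfrak{t}_{\alpha}}$, this congruence is solved by Hensel's lemma: the derivative $2\pi^{\mathfrak{t}_m-e}u+\varepsilon$ of its left-hand side is a unit, since $\mathfrak{t}_m\geq e+1$ puts $2\pi^{\mathfrak{t}_m-e}u$ in $\pp$, so there is a unique solution $u$ modulo $\pp^{t-\mathfrak{t}_m-\mathfrak{t}_{\alpha}}$ (when $t=\mathfrak{t}_m+\mathfrak{t}_{\alpha}$ the congruence is vacuous and every $u$ works, giving the same count). Consequently $x_1=1+\pi^{\mathfrak{t}_m}u$ is determined modulo $\pp^{t-\mathfrak{t}_{\alpha}}$, i.e.\ there are $\norm(\pp)^{\mathfrak{t}_{\alpha}}$ values of $x_1$ modulo $\pp^t$ for each admissible $x_2$. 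Since $x_2$ ranges over the $\norm(\pp)^{t-\mathfrak{t}_m-\mathfrak{t}_{\alpha}}$ residues in $\pp^{\mathfrak{t}_m+\mathfrak{t}_{\alpha}}/\pp^t$, the total number of solutions is $\norm(\pp)^{t-\mathfrak{t}_m}$, as asserted.

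The only point requiring care, exactly as in the proof of Lemma~\ref{thm::system1}, is the valuation bookkeeping: one must check that the ``error'' contributions $\pi^{2\mathfrak{t}_m}u^2$, $\beta x_2^2/\alpha$, and $2x_2/\alpha$ are killed modulo $\pp^{\mathfrak{t}_m+e}$ as soon as $x_2\in\pp^{\mathfrak{t}_m+\mathfrak{t}_{\alpha}}$, which is precisely where the hypotheses $1\leq\mathfrak{t}_{\alpha}\leq\min(e,\ord_{\pp}(\beta))$ and $\mathfrak{t}_m\geq e+1$ are consumed. Beyond this, no genuinely new difficulty arises.
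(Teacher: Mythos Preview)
Your proof is correct and follows essentially the same strategy as the paper: first force $x_2\in\pp^{\mathfrak{t}_m+\mathfrak{t}_{\alpha}}$ by reading the first congruence modulo $\pp^{\mathfrak{t}_m+e}$, and then, for each admissible $x_2$, determine $x_1$ uniquely up to $\pp^{t-\mathfrak{t}_{\alpha}}$ by a Hensel argument. The only cosmetic difference is that the paper carries out the Hensel lifting digit by digit (solving successively for the $\pi$-adic coefficients $y_0,y_1,\ldots$ of $x_1-1$), whereas you apply Hensel's lemma in one stroke to the quadratic in $u$ after observing that its derivative is a unit; both yield $\norm(\pp)^{\mathfrak{t}_{\alpha}}$ choices of $x_1$ per $x_2$ and hence the total $\norm(\pp)^{t-\mathfrak{t}_m}$.
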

\begin{proof}
Suppose that $(x_1,x_2)$ is a solution to (\ref{eqn::system2}). Combining all congruences in (\ref{eqn::system2}), we have
$$
\frac{2x_1x_2}{\alpha}\equiv0\pmod{\pp^{\mathfrak{t}_m+e}}.
$$
Thus the system (\ref{eqn::system2}) is inconsistent if $x_2\not\in\pp^{\mathfrak{t}_m+\mathfrak{t}_{\alpha}}$. So we assume that $x_2\in\pp^{\mathfrak{t}_m+\mathfrak{t}_{\alpha}}$ henceforth. Since $x_1\equiv1\pmod{\pp^{\mathfrak{t}_m}}$, we write
$$
x_1=1+\pi^{\mathfrak{t}_m}\sum_{i\geq0}^{\infty}\pi^iy_i,\quad z_k\coloneqq1+\pi^{\mathfrak{t}_m}\sum_{i\geq0}^{k-1}\pi^iy_i
$$
with $y_0,y_1,\ldots\in\oo/\pp$ for $k\geq0$. Since $x_2\equiv0\pmod{\pp^{\mathfrak{t}_m}}$, using the assumption $\mathfrak{t}_m\geq e+1$, it is easy to deduce that
$$
x_1^2+\frac{2x_1x_2}{\alpha}\equiv z_k^2+\frac{2z_kx_2}{\alpha}+2\pi^{\mathfrak{t}_m+k}y_k\pmod{\pp^{\mathfrak{t}_m+e+k+1}},
$$
holds for $k\geq0$. Therefore, the system (\ref{eqn::system2}) is equivalent to the following system
\begin{equation}
\label{eqn::system2alter}
\begin{dcases}
2\pi^{\mathfrak{t}_m}y_0\equiv1-z_0^2-\frac{2z_0x_2}{\alpha}-\frac{\beta x_2^2}{\alpha}\pmod{\pp^{\mathfrak{t}_m+e+1}},\\
2\pi^{\mathfrak{t}_m+1}y_1\equiv1-z_1^2-\frac{2z_1x_2}{\alpha}-\frac{\beta x_2^2}{\alpha}\pmod{\pp^{\mathfrak{t}_m+e+2}},\\
\quad\quad\quad\vdots\\
2\pi^{t-\mathfrak{t}_{\alpha}-1}y_{t-\mathfrak{t}_m-\mathfrak{t}_{\alpha}-1}\equiv1-z_{t-\mathfrak{t}_m-\mathfrak{t}_{\alpha}-1}^2-\frac{2z_{t-\mathfrak{t}_m-\mathfrak{t}_{\alpha}-1}x_2}{\alpha}-\frac{\beta x_2^2}{\alpha}\pmod{\pp^{t+e-\mathfrak{t}_{\alpha}}},\\
x_2\equiv0\pmod{\pp^{\mathfrak{t}_m+\mathfrak{t}_{\alpha}}}.
\end{dcases}
\end{equation}
Using the same argument in the proof of Lemma \ref{thm::system1}, one can show that $y_0,y_1,\ldots,y_{t-\mathfrak{t}_m-\mathfrak{t}_{\alpha}-1}$ are uniquely determined by $x_2$. Thus, there are $\norm(\pp)^{\mathfrak{t}_{\alpha}}$ solutions to the system (\ref{eqn::system2alter}) for a fixed $x_2\in\pp^{\mathfrak{t}_m+\mathfrak{t}_{\alpha}}$. Since the number of elements $x_2\in\oo/\pp^t$ such that $x_2\in\pp^{\mathfrak{t}_m+\mathfrak{t}_{\alpha}}$ is $\norm(\pp)^{\mathfrak{t}-\mathfrak{t}_m-\mathfrak{t}_{\alpha}}$, the number of solutions $(x_1,x_2)$ modulo $\pp^t$ is $\norm(\pp)^{t-\mathfrak{t}_m}$.
\end{proof}

\begin{lemma}
\label{thm::system3}
Suppose that $\alpha,\beta,s\in\oo$ such that $1\leq\mathfrak{t}_{\alpha}\coloneqq\ord_{\pp}(\alpha)\leq\min(e,\ord_{\pp}(\beta))$. We set
$$
Q_{\nu}\coloneqq s^2+\frac{2s}{\alpha}+\frac{\beta}{\alpha},\quad D_{\nu}\coloneqq2s+\frac{2}{\alpha},
$$
and set $\mathfrak{t}_{\nu}\coloneqq\ord_{\pp}(Q_{\nu})$. Assume that $\mathfrak{t}_m\geq e+1$. For $t\geq\mathfrak{t}_m+\mathfrak{t}_{\alpha}$, the number of solutions $(x_1,x_2)$ modulo $\pp^t$ to the system of congruences given by
\begin{equation}
\label{eqn::system3}
\begin{dcases}
x_1^2+\frac{2x_1x_2}{\alpha}+\frac{\beta x_2^2}{\alpha}\equiv1\pmod{\pp^{t+e-\mathfrak{t}_{\alpha}}},\\
x_1\equiv1-\left(s+\frac{2}{\alpha}\right)x_2\pmod{\pp^{\mathfrak{t}_m}},\\
x_2\equiv0\pmod{\pp^{\mathfrak{t}_m-\mathfrak{t}_{\nu}}},
\end{dcases}
\end{equation}
is $\norm(\pp)^{t-\mathfrak{t}_m}$.
\end{lemma}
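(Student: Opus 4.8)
The plan is to follow the pattern of the proofs of Lemmas~\ref{thm::system1} and~\ref{thm::system2}, and in fact, once a single valuation estimate is in place, to reduce the count directly to Lemma~\ref{thm::system2}. Set $c\coloneqq s+2/\alpha$, which lies in $\oo$ because $\ord_{\pp}(2/\alpha)=e-\mathfrak{t}_{\alpha}\geq0$. The first step is an algebraic reduction: expanding gives the identity
$$
(1-cx_2)^2+\frac{2(1-cx_2)x_2}{\alpha}+\frac{\beta x_2^2}{\alpha}=1-D_{\nu}x_2+Q_{\nu}x_2^2,
$$
which is precisely the reason behind the definitions of $Q_{\nu}$ and $D_{\nu}$. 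Writing $x_1=1-cx_2+\pi^{\mathfrak{t}_m}w$ with $w\in\oo$ (allowed by the second congruence in (\ref{eqn::system3})), substituting into the first congruence, and bounding the resulting cross terms using $\mathfrak{t}_m\geq e+1$ and $x_2\in\oo$, I obtain that every solution $(x_1,x_2)$ of (\ref{eqn::system3}) satisfies
$$
x_2(D_{\nu}-Q_{\nu}x_2)=D_{\nu}x_2-Q_{\nu}x_2^2\equiv0\pmod{\pp^{\mathfrak{t}_m+e-\mathfrak{t}_{\alpha}}}.
$$

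The second step is the one I expect to be the main obstacle: deducing the much stronger estimate $\ord_{\pp}(x_2)\geq\mathfrak{t}_m$. Here I use that $Q_{\nu}\in\oo$, so $\mathfrak{t}_{\nu}\geq0$, and that $\ord_{\pp}(D_{\nu})=e-\mathfrak{t}_{\alpha}$, the latter because $D_{\nu}=2(s\alpha+1)/\alpha$ and $s\alpha\in\pp$ forces $s\alpha+1\in\oo^{\times}$. The third congruence in (\ref{eqn::system3}), or merely $x_2\in\oo$, gives $\ord_{\pp}(Q_{\nu}x_2)=\mathfrak{t}_{\nu}+\ord_{\pp}(x_2)\geq\mathfrak{t}_m\geq e+1>e-\mathfrak{t}_{\alpha}=\ord_{\pp}(D_{\nu})$, so by the ultrametric inequality $\ord_{\pp}(D_{\nu}-Q_{\nu}x_2)=e-\mathfrak{t}_{\alpha}$. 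Then the displayed congruence reads $\ord_{\pp}(x_2)+e-\mathfrak{t}_{\alpha}\geq\mathfrak{t}_m+e-\mathfrak{t}_{\alpha}$, i.e., $\ord_{\pp}(x_2)\geq\mathfrak{t}_m$.

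Once $\ord_{\pp}(x_2)\geq\mathfrak{t}_m$, the term $cx_2$ lies in $\pp^{\mathfrak{t}_m}$, so the second congruence in (\ref{eqn::system3}) is equivalent to $x_1\equiv1\pmod{\pp^{\mathfrak{t}_m}}$ and the third is automatic; conversely every solution of (\ref{eqn::system2}) satisfies (\ref{eqn::system3}). Hence the solutions of (\ref{eqn::system3}) modulo $\pp^t$ are exactly those of (\ref{eqn::system2}), and Lemma~\ref{thm::system2} yields the count $\norm(\pp)^{t-\mathfrak{t}_m}$. Alternatively one may simply rerun the argument of Lemma~\ref{thm::system2}: combining all congruences gives $2x_1x_2/\alpha\equiv0\pmod{\pp^{\mathfrak{t}_m+e}}$, which improves the bound to $\ord_{\pp}(x_2)\geq\mathfrak{t}_m+\mathfrak{t}_{\alpha}$; the ``going up'' procedure then determines the digits $y_0,\ldots,y_{t-\mathfrak{t}_m-\mathfrak{t}_{\alpha}-1}$ of $x_1$ uniquely in terms of $x_2$, leaving $\norm(\pp)^{\mathfrak{t}_{\alpha}}$ choices for $x_1$ for each of the $\norm(\pp)^{t-\mathfrak{t}_m-\mathfrak{t}_{\alpha}}$ admissible values of $x_2$, for a total of $\norm(\pp)^{t-\mathfrak{t}_m}$.
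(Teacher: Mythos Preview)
Your proof is correct and follows essentially the same route as the paper: both substitute the second congruence into the first to obtain $D_{\nu}x_2-Q_{\nu}x_2^2\equiv0$ modulo a suitable power of $\pp$, use the exact valuation $\ord_{\pp}(D_{\nu})=e-\mathfrak{t}_{\alpha}$ together with $\ord_{\pp}(Q_{\nu}x_2)\geq\mathfrak{t}_m>\ord_{\pp}(D_{\nu})$ to force a lower bound on $\ord_{\pp}(x_2)$, and then identify the solution set with that of Lemma~\ref{thm::system2}. Your tracking of the modulus (you get $\pp^{\mathfrak{t}_m+e-\mathfrak{t}_{\alpha}}$ and hence first $\ord_{\pp}(x_2)\geq\mathfrak{t}_m$, leaving the further improvement to Lemma~\ref{thm::system2}) is slightly more conservative than the paper's, which asserts the congruence modulo $\pp^{\mathfrak{t}_m+e}$ and jumps directly to $\ord_{\pp}(x_2)\geq\mathfrak{t}_m+\mathfrak{t}_{\alpha}$, but the reductions to Lemma~\ref{thm::system2} are identical.
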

\begin{proof}
Suppose that $(x_1,x_2)$ is a solution to (\ref{eqn::system3}). Using the assumption $\mathfrak{t}_m\geq e+1$, we see that
$$
x_1^2+\frac{2x_1x_2}{\alpha}+\frac{\beta x_2^2}{\alpha}\equiv1+Q_{\nu}x_2^2-D_{\nu}x_2\pmod{\pp^{\mathfrak{t}_m+e}}.
$$
For $x_2\in\pp^{\mathfrak{t}_m-\mathfrak{t}_{\nu}}$, the assumption $\mathfrak{t}_m\geq e+1$ implies that $\ord_{\pp}(Q_{\nu}x_2^2)>\ord_{\pp}(D_{\nu}x_2)$. Therefore the system (\ref{eqn::system3}) is unsolvable if $x_2\not\in\pp^{\mathfrak{t}_m+\mathfrak{t}_{\alpha}}$. If $x_2\in\pp^{\mathfrak{t}_m+\mathfrak{t}_{\alpha}}$, then the second congruence in (\ref{eqn::system3}) becomes $x_1\equiv1\pmod{\pp^{\mathfrak{t}_m}}$ and the third congruence becomes $x_2\equiv0\pmod{\pp^{\mathfrak{t}_m+\mathfrak{t}_{\alpha}}}$. Noting that we have shown that (\ref{eqn::system2}) is unsolvable if $x_2\not\in\pp^{\mathfrak{t}_m+\mathfrak{t}_{\alpha}}$ in the proof of Lemma \ref{thm::system2}, we see that the system (\ref{eqn::system3}) is equivalent to the system (\ref{eqn::system2}). So the number of solutions $(x_1,x_2)$ modulo $\pp^t$ to the system (\ref{eqn::system3}) is $\norm(\pp)^{t-\mathfrak{t}_m}$.
\end{proof}

\section{Local computations}
\label{sec::localcomputation}

In this section, we keep using the local setting in Section \ref{sec::startlocal}. Let $X=L+\nu$ be a shifted lattice in a quadratic space of dimension $2$ over $K$. We are going to compute the group index $[O^{+}(L):O^{+}(X)]$. Since the group index is invariant under scaling, by Lemma \ref{thm::threetypes}, we may assume that $L$ is of one of the following types:
\begin{enumerate}
\item $L\simeq D(1,\alpha)$ with $\alpha\in\oo$.
\item $L\simeq A(0,0)$.
\item $L\simeq A(\alpha,\beta)$ with $\alpha,\beta\in\oo$ such that $1\leq\ord_{\pp}(\alpha)\leq\min(e,\ord_{\pp}(\beta))$.
\end{enumerate}
For the shift $\nu$, we may assume that
$$
\nu=\frac{s_1e_1+s_2e_2}{\pi^{\mathfrak{t}_m}},
$$
with $\mathfrak{t}_m\in\ZZ$ and $s_1,s_2\in\oo$ such that either $s_1\in\oo^{\times}$ or $s_2\in\oo^{\times}$. If $L$ is of type (1), then we set $\mathfrak{t}_{\nu}\coloneqq\ord_{\pp}(s_1^2+\alpha s_2^2)$ and $\mathfrak{t}_s\coloneqq\ord_{\pp}(s_1)$. If $L$ is of type (3), then we set $\mathfrak{t}_{\alpha}\coloneqq\ord_{\pp}(\alpha)$. Furthermore, we set
$$
\mathfrak{t}_L\coloneqq
\begin{dcases}
e,&\text{ if }L\text{ is of type (1)},\\
0,&\text{ if }L\text{ is of type (2)},\\
e-\mathfrak{t}_{\alpha},&\text{ if }L\text{ is of type (3)}.
\end{dcases}
$$
Let $G$ be the Gram matrix of $L$ with respect to the basis so that $L$ is of type (1)-(3). For $t\geq\mathfrak{t}_L+1$, we set
$$
O^{+}(L,\pp^t)\coloneqq\left\{M\in\GL_2(\oo/\pp^t)~\bigg|\begin{array}{c}M^TGM\equiv G\pmod{\pp^{t+\mathfrak{t}_L}}\\\det(M)\equiv1\pmod{\pp^{t+\mathfrak{t}_L}}\end{array}\right\}.
$$
For $t\geq\max(\mathfrak{t}_m,\mathfrak{t}_L+1)$, we set
$$
O^{+}(X,\pp^t)\coloneqq\left\{M\in O^{+}(L,\pp^t)~\bigg|~M\begin{pmatrix}s_1\\s_2\end{pmatrix}\equiv\begin{pmatrix}s_1\\s_2\end{pmatrix}\pmod{\pp^{\mathfrak{t}_m}}\right\}.
$$
For $x_1,x_2\in\oo$, or $\oo/\pp^t$, we define
$$
M_L(x_1,x_2)\coloneqq
\begin{dcases}
\begin{pmatrix}x_1 & -\alpha x_2\\x_2 & x_1\end{pmatrix},&\text{ if }L\text{ is of type (1)},\\
\begin{pmatrix}x_1 & 0\\0 & x_2\end{pmatrix},&\text{ if }L\text{ is of type (2)},\\
\begin{pmatrix}x_1 & -\frac{\beta}{\alpha}x_2\\x_2 & x_1+\frac{2}{\alpha}x_2\end{pmatrix},&\text{ if }L\text{ is of type (3)}.
\end{dcases}
$$

\begin{lemma}
\label{thm::explicitorthogonalgroup}
Any matrix $M\in O^{+}(L)$ is of the form $M=M_L(x_1,x_2)$ with $x_1,x_2\in\oo$ such that $\det(M_L(x_1,x_2))=1$. If $t\geq\max(\mathfrak{t}_m,\mathfrak{t}_L+1)$, then any matrix $M\in O^{+}(L,\pp^t)$ is of the form $M=M_L(x_1,x_2)$ with $x_1,x_2\in\oo/\pp^t$ such that $\det(M_L(x_1,x_2))\equiv1\pmod{\pp^{t+\mathfrak{t}_L}}$.
\end{lemma}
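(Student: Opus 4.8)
The plan is to exploit the elementary fact that a $2\times2$ matrix $M$ with $\det M=1$ satisfies $M^{-1}=\mathrm{adj}(M)$, where $\mathrm{adj}$ denotes the adjugate, so that the orthogonality relation $M^{T}GM=G$ is equivalent (given $\det M=1$) to the relation $M^{T}G=G\,\mathrm{adj}(M)$. The decisive point is that this reformulated relation is $\oo$-linear in the entries of $M$: transposition is linear, and for $2\times2$ matrices $M\mapsto\mathrm{adj}(M)$ is linear as well. Thus $\{M\in\mathrm{Mat}_{2}(\oo):M^{T}G=G\,\mathrm{adj}(M)\}$ is an $\oo$-submodule, and the first assertion reduces to checking, for each of the three shapes of $G$, that this submodule is contained in $\{M_L(x_1,x_2)\}$.

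Concretely, I would write $M=\left(\begin{smallmatrix}a&b\\c&d\end{smallmatrix}\right)$ and compare the four entries of $M^{T}G$ with those of $G\,\mathrm{adj}(M)$. For type (1), $G=D(1,\alpha)$, the comparison gives $b=-\alpha c$ and $d=a$, so $M=M_L(a,c)$. For type (2), $G=A(0,0)$, it gives $2b=0=2c$, hence $b=c=0$ since $\mathrm{char}\,K=0$, so $M=M_L(a,d)$ is diagonal. For type (3), $G=A(\alpha,\beta)$, it gives $\alpha b+\beta c=0$ and $\alpha(a-d)+2c=0$; since $\beta/\alpha,2/\alpha\in\oo$ (because $\ord_{\pp}(\alpha)\le\min(e,\ord_{\pp}(\beta))$) and $\oo$ is a domain, this forces $b=-(\beta/\alpha)c$ and $d=a+(2/\alpha)c$, i.e.\ $M=M_L(a,c)$. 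Setting $(x_1,x_2)=(a,c)$ (or $(a,d)$ in type (2)) and observing that $\det M=\det M_L(x_1,x_2)$, the hypothesis $\det M=1$ yields the asserted determinant condition. A convenient sanity check at this stage is the identity $M_L(x_1,x_2)^{T}G\,M_L(x_1,x_2)=\det(M_L(x_1,x_2))\cdot G$, valid in all three cases.

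For the statement about $O^{+}(L,\pp^{t})$ I would run exactly the same computation modulo $\pp^{t+\mathfrak{t}_L}$. From $M^{T}GM\equiv G$ and $\det M\equiv1\pmod{\pp^{t+\mathfrak{t}_L}}$, multiplying on the right by $\mathrm{adj}(M)$ and using $M\,\mathrm{adj}(M)=\det(M)\cdot I\equiv I$ gives $M^{T}G\equiv G\,\mathrm{adj}(M)\pmod{\pp^{t+\mathfrak{t}_L}}$; the same entrywise comparison then yields the linear relations above as congruences modulo $\pp^{t+\mathfrak{t}_L}$. Reducing modulo $\pp^{t}$ identifies $M$ with some $M_L(x_1,x_2)$ in $\GL_{2}(\oo/\pp^{t})$, and because the relations $d\equiv a$, $b\equiv-\alpha c$ (type (1)), resp.\ their type-(3) analogues, persist to the full modulus $\pp^{t+\mathfrak{t}_L}$, one obtains $\det M_L(x_1,x_2)\equiv\det M\equiv1\pmod{\pp^{t+\mathfrak{t}_L}}$, as required.

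The step requiring real care — and the reason the shift $\mathfrak{t}_L$ is built into the definition of $O^{+}(L,\pp^{t})$ — is the dyadic case. There $2$ (and, in type (3), $\alpha$) is not a unit, so solving the linear relations for $d$ and $b$ in terms of $c$ forces one to divide by $2$ (and by $\alpha$), which costs $\pp$-adic precision; the values $\mathfrak{t}_L=e$ in type (1) and $\mathfrak{t}_L=e-\mathfrak{t}_{\alpha}$ in type (3) are chosen precisely so that, after this division, the relations still hold modulo $\pp^{t}$ and the determinant congruence survives modulo $\pp^{t+\mathfrak{t}_L}$. Keeping this valuation bookkeeping straight is the main obstacle; everything else is routine $2\times2$ linear algebra.
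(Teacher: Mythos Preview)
Your proposal is correct and takes essentially the same approach as the paper: the paper's proof simply says to solve $M^{T}G=GM^{-1}$ (and its congruence analogue) and leaves the details to the reader, and your observation that $M^{-1}=\mathrm{adj}(M)$ when $\det M=1$ is exactly the linearization that makes this routine. Your case-by-case entry comparison is precisely the computation the paper omits, so there is nothing to add.
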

\begin{proof}
One can prove this lemma by solving the equation $M^{T}G=GM^{-1}$ and solving the equation $M^{T}G\equiv GM^{-1}\pmod{\pp^{t+\mathfrak{t}_L}}$. The details are left to the readers.
\end{proof}

\begin{lemma}
\label{thm::rewriteorthogonalgrouptotal}
For $t\geq\max(\mathfrak{t}_m,\mathfrak{t}_L+1)$, there is an isomorphism between groups
$$
O^{+}(L)/O^{+}(X)\simeq O^{+}(L,\pp^t)/O^{+}(X,\pp^t).
$$
\end{lemma}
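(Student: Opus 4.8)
The goal is to exhibit a natural isomorphism $O^{+}(L)/O^{+}(X)\simeq O^{+}(L,\pp^t)/O^{+}(X,\pp^t)$ for $t\geq\max(\mathfrak{t}_m,\mathfrak{t}_L+1)$. The plan is to build a surjective group homomorphism $\varphi\colon O^{+}(L)\to O^{+}(L,\pp^t)/O^{+}(X,\pp^t)$ by reducing an isometry modulo $\pp^t$, and then identify its kernel as exactly $O^{+}(X)$. By Lemma \ref{thm::explicitorthogonalgroup}, every $M\in O^{+}(L)$ has the shape $M=M_L(x_1,x_2)$ with $x_1,x_2\in\oo$ and $\det M=1$; reducing the entries modulo $\pp^t$ gives a matrix $\bar M=M_L(\bar x_1,\bar x_2)\in\GL_2(\oo/\pp^t)$ that satisfies $\bar M^T G\bar M\equiv G$ and $\det\bar M\equiv 1$ modulo $\pp^t$, hence a fortiori modulo nothing stronger is claimed here — but one checks the defining congruences of $O^{+}(L,\pp^t)$ hold modulo $\pp^{t+\mathfrak{t}_L}$ because the exact identities $M^TGM=G$ and $\det M=1$ hold in $\oo$. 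Thus reduction gives a well-defined map $O^{+}(L)\to O^{+}(L,\pp^t)$, and composing with the quotient projection yields $\varphi$; it is a homomorphism since reduction modulo $\pp^t$ is.

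First I would check surjectivity of $O^{+}(L)\to O^{+}(L,\pp^t)$ (which immediately gives surjectivity of $\varphi$). Given $\bar M=M_L(\bar x_1,\bar x_2)\in O^{+}(L,\pp^t)$, lift $\bar x_1,\bar x_2$ arbitrarily to $x_1,x_2\in\oo$; the matrix $M_L(x_1,x_2)$ is an isometry of the quadratic space by the algebraic shape of $M_L$ (these are precisely the norm-one elements of the even Clifford algebra / the rotation group of the binary form), but its determinant is only $\equiv 1\pmod{\pp^t}$, and its entries only satisfy $M^TGM\equiv G\pmod{\pp^{t+\mathfrak t_L}}$. To produce a genuine element of $O^{+}(L)$ one corrects the lift: in each of the three types the condition $\det M_L(x_1,x_2)=1$ is a single quadratic equation in $(x_1,x_2)$ (of the form $x_1^2+\alpha x_2^2=1$, or $x_1x_2=1$, or $x_1^2+\tfrac{2}{\alpha}x_1x_2+\tfrac{\beta}{\alpha}x_2^2=1$), and one uses Hensel's lemma — exactly in the form packaged by Lemmas \ref{thm::system1}--\ref{thm::system3} and the hypothesis $t\geq\mathfrak t_L+1$ together with $\mathfrak t_m\geq e+1$ where relevant — to adjust the lift to an exact solution over $\oo$ while keeping it congruent to $(\bar x_1,\bar x_2)$ modulo $\pp^t$. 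That exact solution gives $M\in O^{+}(L)$ with $\bar M\equiv M_L(\bar x_1,\bar x_2)\pmod{\pp^t}$, proving surjectivity.

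Next I would compute the kernel of $\varphi$. By definition $M=M_L(x_1,x_2)\in\ker\varphi$ iff its reduction $\bar M$ lies in $O^{+}(X,\pp^t)$, i.e.\ $\bar M$ fixes $(s_1,s_2)^T$ modulo $\pp^{\mathfrak t_m}$. Since $\mathfrak t_m\leq t$, this says precisely $M\binom{s_1}{s_2}\equiv\binom{s_1}{s_2}\pmod{\pp^{\mathfrak t_m}}$, which is exactly the condition $M\nu\in L$, i.e.\ $M$ stabilizes the shifted lattice $X=L+\nu$. Hence $\ker\varphi=O^{+}(L)\cap O^{+}(X)=O^{+}(X)$ (as $O^{+}(X)\subseteq O^{+}(L)$ always, the stabilizer of $X$ being contained in the stabilizer of $L=X-\nu$... more precisely $O^{+}(X)$ is by construction a subgroup of $O^{+}(L)$ once one notes $\sigma(X)=X\Rightarrow\sigma(L)=L$ since $L$ is the difference set $X-X$ suitably interpreted, or directly from the defining congruence). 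The first isomorphism theorem then gives $O^{+}(L)/O^{+}(X)\simeq O^{+}(L,\pp^t)/O^{+}(X,\pp^t)$.

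The main obstacle is the surjectivity step, specifically verifying that an arbitrary lift $(x_1,x_2)$ of a point of $O^{+}(L,\pp^t)$ can be Hensel-corrected to an exact solution of $\det M_L(x_1,x_2)=1$ without disturbing its residue modulo $\pp^t$; this is where the hypotheses $t\geq\mathfrak t_L+1$ (and $\mathfrak t_m\geq e+1$ in the dyadic modular cases) are consumed, and where one must treat the three Jordan types separately using the companion Lemmas \ref{thm::system1}, \ref{thm::system2}, \ref{thm::system3} — or rather their underlying Hensel-lifting mechanism — to guarantee the correction term lies in $\pp^t$. The remaining verifications (that reduction respects the group law, that the defining congruences modulo $\pp^{t+\mathfrak t_L}$ are inherited from exact identities, and the kernel identification) are routine.
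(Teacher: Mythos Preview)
Your approach is correct and matches the paper's own proof: define the reduction map $O^{+}(L)\to O^{+}(L,\pp^t)$, show it is surjective via Lemma~\ref{thm::explicitorthogonalgroup} and Hensel's lemma, compose with the quotient, and identify the kernel as $O^{+}(X)$.

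One point of confusion to clean up: the surjectivity step does \emph{not} require Lemmas~\ref{thm::system1}--\ref{thm::system3}, nor the hypothesis $\mathfrak{t}_m\geq e+1$ (which is not assumed in this lemma and only enters later in Theorems~\ref{thm::diagonal2}--\ref{thm::remainingcase}). Those lemmas count solutions to systems of congruences; here you only need to lift a single approximate solution of $\det M_L(x_1,x_2)=1$ to an exact one. The hypothesis $t\geq\mathfrak{t}_L+1$ alone is what makes ordinary Hensel work: for each type the relevant partial derivative of $\det M_L(x_1,x_2)-1$ has valuation exactly $\mathfrak{t}_L$ at any point of $O^{+}(L,\pp^t)$ (e.g.\ $2x_1$ in type~(1) with $x_1\in\oo^{\times}$, or $\tfrac{2}{\alpha}x_1+\tfrac{2\beta}{\alpha}x_2$ in type~(3)), and the approximation is good to order $t+\mathfrak{t}_L>2\mathfrak{t}_L$. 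So drop the references to the counting lemmas and to $\mathfrak{t}_m\geq e+1$; otherwise the argument is exactly what the paper does.
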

\begin{proof}
By Lemma \ref{thm::explicitorthogonalgroup} and Hensel's lemma, the reduction map from $O^{+}(L)$ to $O^{+}(L,\pp^t)$ is surjective, which induces a surjective homomorphism from $O^{+}(L)$ to $O^{+}(L,\pp^t)/O^{+}(X,\pp^t)$. Since the kernel of this map is exactly $O^{+}(X)$, we see that $O^{+}(L)/O^{+}(X)\simeq O^{+}(L,\pp^t)/O^{+}(X,\pp^t)$.
\end{proof}

By the isomorphism, it suffices to calculate the group index $[O^{+}(L,\pp^t)\colon O^{+}(X,\pp^t)]$ for sufficiently large $t$. We set
\begin{equation}
\label{eqn::definebeta}
\beta_{\pp}^{+}(X;X)\coloneqq\lim\limits_{t\to\infty}\frac{|O^{+}(X,\pp^t)|}{\norm(\pp)^t}.
\end{equation}
In particular, if $\mathfrak{t}_m=0$, we have
$$
\beta_{\pp}^{+}(X;X)=\beta_{\pp}^{+}(L;L)=\lim\limits_{t\to\infty}\frac{|O^{+}(L,\pp^t)|}{\norm(\pp)^t}.
$$
As long as the limits exist, we can rewrite the group index as
$$
[O^{+}(L,\pp^t)\colon O^{+}(X,\pp^t)]=\frac{\beta_{\pp}^{+}(L;L)}{\beta_{\pp}^{+}(X;X)},
$$
for sufficiently large $t$. In the following theorems, we show that the limits exist and one can explicitly calculate them, providing that $\mathfrak{t}_m$ is sufficiently large.

\begin{theorem}
\label{thm::diagonalp}
Assume that $\pp\nmid2$ and $L$ is of type (1). Then we have
$$
\beta_{\pp}^{+}(X;X)=
\begin{dcases}
1-\eta(-\alpha)\norm(\pp)^{-1},&\text{ if }\mathfrak{t}_m=0,\\
\norm(\pp)^{-\mathfrak{t}_m+\min(\mathfrak{t}_m,\mathfrak{t}_{\nu},\mathfrak{t}_s)},&\text{ if }\mathfrak{t}_m\geq1.
\end{dcases}
$$
\end{theorem}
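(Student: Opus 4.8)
The plan is to translate the statement into a lattice-point count via Lemma~\ref{thm::explicitorthogonalgroup} and then feed it into Lemmas~\ref{thm::system0} and~\ref{thm::system1}. Since $\pp\nmid2$ we have $e=0$, hence $\mathfrak{t}_L=0$, and for $t\geq\max(\mathfrak{t}_m,1)$ Lemma~\ref{thm::explicitorthogonalgroup} gives a bijection between $O^{+}(L,\pp^t)$ and the set of pairs $(x_1,x_2)\in(\oo/\pp^t)^2$ with $x_1^2+\alpha x_2^2\equiv1\pmod{\pp^t}$, namely $(x_1,x_2)\mapsto M_L(x_1,x_2)$. Under this bijection the subgroup $O^{+}(X,\pp^t)$ is cut out by the extra condition $M_L(x_1,x_2)\binom{s_1}{s_2}\equiv\binom{s_1}{s_2}\pmod{\pp^{\mathfrak{t}_m}}$, i.e.\ by the pair of linear congruences
\begin{equation*}
(x_1-1)s_1-\alpha x_2s_2\equiv0\pmod{\pp^{\mathfrak{t}_m}},\qquad x_2s_1+(x_1-1)s_2\equiv0\pmod{\pp^{\mathfrak{t}_m}}.
\end{equation*}
If $\mathfrak{t}_m=0$ these are vacuous, so $O^{+}(X,\pp^t)=O^{+}(L,\pp^t)$, whose cardinality is $\norm(\pp)^t(1-\eta(-\alpha)\norm(\pp)^{-1})$ by Lemma~\ref{thm::system0}; dividing by $\norm(\pp)^t$ and letting $t\to\infty$ yields the first case of the theorem.

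Now suppose $\mathfrak{t}_m\geq1$, and recall that $s_1\in\oo^{\times}$ or $s_2\in\oo^{\times}$. First assume $s_2\in\oo^{\times}$ and set $s\coloneqq s_1s_2^{-1}\in\oo$. The second linear congruence reads $x_1\equiv1-sx_2\pmod{\pp^{\mathfrak{t}_m}}$; substituting this into the first one and cancelling the unit $s_2$ turns the first congruence into $x_2(s^2+\alpha)\equiv0\pmod{\pp^{\mathfrak{t}_m}}$, that is, $x_2\equiv0\pmod{\pp^{\mathfrak{t}_m-\mathfrak{t}_{\nu}}}$ (interpreted as void when $\mathfrak{t}_m\leq\mathfrak{t}_{\nu}$). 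Since $\ord_{\pp}(s)=\ord_{\pp}(s_1)=\mathfrak{t}_s$ and $\ord_{\pp}(s^2+\alpha)=\ord_{\pp}(s_1^2+\alpha s_2^2)=\mathfrak{t}_{\nu}$, the system defining $O^{+}(X,\pp^t)$ is exactly the system~(\ref{eqn::system1}) of Lemma~\ref{thm::system1} for the present $\alpha$ and $s$, with $e=0$. Its hypotheses hold automatically: (1) is $\mathfrak{t}_m\geq1$; when $\mathfrak{t}_s\geq\mathfrak{t}_{\nu}$, (2) reads $\mathfrak{t}_m+\mathfrak{t}_{\nu}-2\min(\mathfrak{t}_m,\mathfrak{t}_{\nu})=|\mathfrak{t}_m-\mathfrak{t}_{\nu}|\geq0$; and when $\mathfrak{t}_s<\mathfrak{t}_{\nu}$, (3) reads $\max(\mathfrak{t}_m,\mathfrak{t}_{\nu})\geq\mathfrak{t}_s+1$, which is clear since then $\mathfrak{t}_{\nu}\geq\mathfrak{t}_s+1$. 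Hence $|O^{+}(X,\pp^t)|=\norm(\pp)^{t-\mathfrak{t}_m+\min(\mathfrak{t}_m,\mathfrak{t}_{\nu},\mathfrak{t}_s)}$ for $t$ large, giving the asserted value of $\beta_{\pp}^{+}(X;X)$.

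Finally assume $s_1\in\oo^{\times}$ and $s_2\in\pp$; then $\mathfrak{t}_s=0$, and since $s_1^2\in\oo^{\times}$ while $\alpha s_2^2\in\pp$ we also get $\mathfrak{t}_{\nu}=0$, so the claimed value is $\norm(\pp)^{-\mathfrak{t}_m}$. Solving the first linear congruence for $x_1-1$ (using that $s_1$ is a unit) and substituting into the second shows that the two linear congruences are together equivalent to $x_1\equiv1\pmod{\pp^{\mathfrak{t}_m}}$ and $x_2\equiv0\pmod{\pp^{\mathfrak{t}_m}}$. Writing $x_1=1+\pi^{\mathfrak{t}_m}a$ and $x_2=\pi^{\mathfrak{t}_m}b$, the equation $x_1^2+\alpha x_2^2\equiv1\pmod{\pp^t}$ becomes $2a+\pi^{\mathfrak{t}_m}(a^2+\alpha b^2)\equiv0\pmod{\pp^{t-\mathfrak{t}_m}}$; as $2\in\oo^{\times}$, for each $b$ modulo $\pp^{t-\mathfrak{t}_m}$ there is a unique such $a$ modulo $\pp^{t-\mathfrak{t}_m}$ by Hensel's lemma (the one-variable specialisation of the lifting argument in the proof of Lemma~\ref{thm::system1}). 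Hence $|O^{+}(X,\pp^t)|=\norm(\pp)^{t-\mathfrak{t}_m}$, so $\beta_{\pp}^{+}(X;X)=\norm(\pp)^{-\mathfrak{t}_m}$, completing the proof.

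The argument is mostly routine given the three cited lemmas, so there is no single hard step; the points requiring care are the bookkeeping identities $\ord_{\pp}(s)=\mathfrak{t}_s$, $\ord_{\pp}(s^2+\alpha)=\mathfrak{t}_{\nu}$ after the substitution, the verification that the hypotheses of Lemma~\ref{thm::system1} become vacuous when $e=0$ and $\mathfrak{t}_m\geq1$, and the realisation that the subcase $s_2\in\pp$ is \emph{not} literally an instance of Lemma~\ref{thm::system1} (the forced congruence $x_2\equiv0\pmod{\pp^{\mathfrak{t}_m}}$ is stronger than what that lemma yields for $s=0$), which is why the short direct Hensel computation is needed there; one should also confirm the degenerate possibilities $s_1=0$ (which falls under $s_2\in\oo^{\times}$ with $s=0$) and $\alpha\in\pp$ cause no difficulty, using $\alpha\neq0$ by non-degeneracy of $V$.
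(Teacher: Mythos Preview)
Your proof is correct and follows essentially the same route as the paper: reduce via Lemma~\ref{thm::explicitorthogonalgroup} to counting solutions of the system~(\ref{eqn::systemp}), handle $\mathfrak{t}_m=0$ by Lemma~\ref{thm::system0}, and for $\mathfrak{t}_m\geq1$ split on whether $s_2$ is a unit and feed the resulting system into Lemma~\ref{thm::system1}. Your explicit verification that the hypotheses of Lemma~\ref{thm::system1} become automatic when $e=0$ is a nice addition that the paper leaves implicit.

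One small correction to your closing commentary: the subcase $s_2\in\pp$ \emph{is} in fact handled by Lemma~\ref{thm::system1} in the paper, just not always with $s=0$. When $\alpha\in\oo^{\times}$ the choice $s=0$ gives $\mathfrak{t}_{\nu}=0$ and the third congruence of~(\ref{eqn::system1}) becomes exactly $x_2\equiv0\pmod{\pp^{\mathfrak{t}_m}}$; when $\alpha\in\pp$ the paper takes $s=s_1\in\oo^{\times}$ instead, so that the lemma's $\mathfrak{t}_{\nu}=\ord_{\pp}(s_1^2+\alpha)=0$ forces $x_2\equiv0\pmod{\pp^{\mathfrak{t}_m}}$, and then the second congruence $x_1\equiv1-s_1x_2$ collapses to $x_1\equiv1$. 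Your direct Hensel computation in that subcase is of course an equally valid (and arguably more transparent) alternative.
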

\begin{proof}
By Lemma \ref{thm::explicitorthogonalgroup}, if $t\geq\max(\mathfrak{t}_m,\mathfrak{t}_L+1)$, then we see that $|O^{+}(X,\pp^t)|$ is the number of the solutions modulo $\pp^t$ to the system
\begin{equation}
\label{eqn::systemp}
\begin{dcases}
x_1^2+\alpha x_2^2\equiv1\pmod{\pp^{t+e}},\\
s_1(x_1-1)-\alpha s_2x_2\equiv0\pmod{\pp^{\mathfrak{t}_m}},\\
s_2(x_1-1)+s_1x_2\equiv0\pmod{\pp^{\mathfrak{t}_m}}.
\end{dcases}
\end{equation}
If $\mathfrak{t}_m=0$, then the formula for $\beta_{\pp}^{+}(X;X)$ follows from Lemma \ref{thm::system0} immediately. It remains to prove the formula for $\beta_{\pp}^{+}(X;X)$ when $\mathfrak{t}_m\geq1$.

First, we assume that $s_2\in\pp$. Then by our assumption $s_1\in\oo^{\times}$ and it follows that $s_1^2+\alpha s_2^2\in\oo^{\times}$. Thus the system (\ref{eqn::systemp}) is equivalent to the system
\begin{equation}
\label{eqn::systempalter1}
\begin{dcases}
x_1^2+\alpha x_2^2\equiv1\pmod{\pp^{t+e}},\\
x_1\equiv1\pmod{\pp^{\mathfrak{t}_m}},\\
x_2\equiv0\pmod{\pp^{\mathfrak{t}_m}}.
\end{dcases}
\end{equation}
Thus, by applying Lemma \ref{thm::system1} with $s=0$ when $\alpha\in\oo^{\times}$ and with $s=s_1$ when $\alpha\in\pp$, we see that the number of solutions to the system (\ref{eqn::systempalter1}) is $\norm(\pp)^{t-\mathfrak{t}_m}$ whenever $t\geq\mathfrak{t}_m$. Therefore, we have
$$
\beta_{\pp}^{+}(X;X)=\norm(\pp)^{-\mathfrak{t}_m}.
$$

Second, we assume that $s_2\in\oo^{\times}$. Then we may assume that $s_2=1$ because multiplying the shift $\nu$ by a unit preserves the group $O^{+}(X,\pp^t)$. In this case, the system (\ref{eqn::systemp}) is equivalent to the system
\begin{equation}
\label{eqn::systempalter2}
\begin{dcases}
x_1^2+\alpha x_2^2\equiv1\pmod{\pp^{t+e}},\\
x_1\equiv1-s_1x_2\pmod{\pp^{\mathfrak{t}_m}},\\
x_2\equiv0\pmod{\pp^{\mathfrak{t}_m-\mathfrak{t}_{\nu}}}.
\end{dcases}
\end{equation}
Therefore, we can apply Lemma \ref{thm::system1} with $s=s_1$ to see that the number of solutions to the system (\ref{eqn::systempalter2}) is $\norm(\pp)^{t-\mathfrak{t}_m+\min(\mathfrak{t}_m,\mathfrak{t}_{\nu},\mathfrak{t}_s)}$ when $t\geq\mathfrak{t}_m$. Hence, we see that
$$
\beta_{\pp}^{+}(X;X)=\norm(\pp)^{-\mathfrak{t}_m+\min(\mathfrak{t}_m,\mathfrak{t}_{\nu},\mathfrak{t}_s)},
$$
as desired.
\end{proof}

If $K$ is dyadic, then $e\geq1$. In order to guarantee that the assumptions of Lemma \ref{thm::system1} hold, we have to assume that $\mathfrak{t}_m$ is sufficiently large.

\begin{theorem}
\label{thm::diagonal2}
Assume that $\pp\mid2$ and $L$ is of type (1). If we assume that $\mathfrak{t}_m\geq\ord_{\pp}(\alpha)+e+1$, then we have
$$
\beta_{\pp}^{+}(X;X)=\norm(\pp)^{-\mathfrak{t}_m+\min(\mathfrak{t}_m,\mathfrak{t}_{\nu},\mathfrak{t}_s)}.
$$
\end{theorem}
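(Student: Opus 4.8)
The plan is to follow the same strategy as in the proof of Theorem \ref{thm::diagonalp}, reducing the computation of $|O^{+}(X,\pp^t)|$ to a count of solutions of a system of congruences and then invoking Lemma \ref{thm::system1}. First I would fix a basis $\{e_1,e_2\}$ with $L\simeq D(1,\alpha)$ and write $\nu=(s_1e_1+s_2e_2)/\pi^{\mathfrak{t}_m}$ with $s_1,s_2\in\oo$ and $\max(\ord_{\pp}(s_1),\ord_{\pp}(s_2))=0$. By Lemma \ref{thm::explicitorthogonalgroup}, for $t\geq\max(\mathfrak{t}_m,\mathfrak{t}_L+1)=\mathfrak{t}_m$ (note $\mathfrak{t}_L=e$ and the hypothesis forces $\mathfrak{t}_m\geq e+1$), the quantity $|O^{+}(X,\pp^t)|$ equals the number of solutions $(x_1,x_2)$ modulo $\pp^t$ to the system
$$
\begin{dcases}
x_1^2+\alpha x_2^2\equiv1\pmod{\pp^{t+e}},\\
s_1(x_1-1)-\alpha s_2x_2\equiv0\pmod{\pp^{\mathfrak{t}_m}},\\
s_2(x_1-1)+s_1x_2\equiv0\pmod{\pp^{\mathfrak{t}_m}},
\end{dcases}
$$
exactly as in \eqref{eqn::systemp}, since the matrix-valued equation $M^TGM\equiv G$ together with $\det M\equiv1$ and $M\binom{s_1}{s_2}\equiv\binom{s_1}{s_2}$ do not see whether $\pp$ is dyadic.

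Next I would split into the two cases $s_2\in\pp$ and $s_2\in\oo^{\times}$, just as before. If $s_2\in\pp$ then $s_1\in\oo^{\times}$, so $s_1^2+\alpha s_2^2\in\oo^{\times}$, and the last two congruences are equivalent to $x_1\equiv1$ and $x_2\equiv0$ modulo $\pp^{\mathfrak{t}_m}$; here $\mathfrak{t}_{\nu}=\ord_{\pp}(s_1^2+\alpha s_2^2)=0$ and $\mathfrak{t}_s=0$, so the claimed exponent is $-\mathfrak{t}_m$. If $s_2\in\oo^{\times}$, after scaling $\nu$ by a unit we may take $s_2=1$, and the system becomes equivalent to \eqref{eqn::systempalter2}, i.e.
$$
\begin{dcases}
x_1^2+\alpha x_2^2\equiv1\pmod{\pp^{t+e}},\\
x_1\equiv1-s_1x_2\pmod{\pp^{\mathfrak{t}_m}},\\
x_2\equiv0\pmod{\pp^{\mathfrak{t}_m-\mathfrak{t}_{\nu}}},
\end{dcases}
$$
with $\mathfrak{t}_{\nu}=\ord_{\pp}(s_1^2+\alpha)$, $\mathfrak{t}_s=\ord_{\pp}(s_1)$.

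The crucial point, and the step I expect to be the only real obstacle, is verifying that the hypotheses (1)--(3) of Lemma \ref{thm::system1} are satisfied (with $s=s_1$, and with $s=0$ in the degenerate $\alpha\in\oo^{\times}$, $s_2\in\pp$ sub-case), so that the lemma applies and yields the count $\norm(\pp)^{t-\mathfrak{t}_m+\min(\mathfrak{t}_m,\mathfrak{t}_{\nu},\mathfrak{t}_s)}$. Hypothesis (1), $\mathfrak{t}_m\geq e+1$, is immediate from $\mathfrak{t}_m\geq\ord_{\pp}(\alpha)+e+1$. For hypotheses (2) and (3) I would argue as follows: since $L\simeq D(1,\alpha)$ has scale in $\oo$, we have $\alpha\in\oo$, and one checks that $\mathfrak{t}_{\nu}=\ord_{\pp}(s_1^2+\alpha)\le\ord_{\pp}(\alpha)$ whenever $\mathfrak{t}_s=\ord_{\pp}(s_1)\ge 1$ (because then $\ord_\pp(s_1^2)\ge 2$ and $\ord_\pp(\alpha)\le\ord_\pp(s_1^2+\alpha)$ would force $\ord_\pp(s_1^2+\alpha)=\ord_\pp(\alpha)$ unless $\ord_\pp(\alpha)\ge 2\mathfrak t_s$, in which case $\mathfrak t_\nu=2\mathfrak t_s$); more uniformly, $\mathfrak{t}_\nu\le\max(\ord_\pp(\alpha),2\mathfrak t_s)$. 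Feeding $\mathfrak{t}_m\ge\ord_\pp(\alpha)+e+1$ into the inequalities of (2) and (3) and using $\min(\mathfrak t_m,\mathfrak t_\nu)=\mathfrak t_\nu$ (as $\mathfrak t_\nu\le\ord_\pp(\alpha)<\mathfrak t_m$ once $\mathfrak t_s\le \ord_\pp(\alpha)/2$, with the complementary range handled directly) reduces both to easy numerical checks. Once the applicability of Lemma \ref{thm::system1} is confirmed, dividing the solution count by $\norm(\pp)^t$ and letting $t\to\infty$ gives $\beta_{\pp}^{+}(X;X)=\norm(\pp)^{-\mathfrak{t}_m+\min(\mathfrak{t}_m,\mathfrak{t}_{\nu},\mathfrak{t}_s)}$, as claimed. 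I would write out the hypothesis-checking carefully but relegate the purely arithmetic manipulations to a short remark, since they mirror those already done for the non-dyadic case.
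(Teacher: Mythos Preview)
Your overall strategy is exactly the paper's: reduce to the system \eqref{eqn::systemp}, split on whether $s_2\in\pp$ or $s_2\in\oo^{\times}$, and in each case invoke Lemma~\ref{thm::system1}. The reduction and the case $s_2\in\pp$ are handled correctly.

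There is, however, a real gap in your verification of hypotheses (2)--(3) of Lemma~\ref{thm::system1} in the case $s_2\in\oo^{\times}$. Your claim that $\mathfrak{t}_{\nu}\le\max(\ord_{\pp}(\alpha),2\mathfrak{t}_s)$ is false: when $2\mathfrak{t}_s=\ord_{\pp}(\alpha)$, the terms $s_1^2$ and $\alpha$ have the same valuation and can cancel, so $\mathfrak{t}_{\nu}=\ord_{\pp}(s_1^2+\alpha)$ may be arbitrarily large (even $\ge\mathfrak{t}_m$). Your parenthetical argument asserts ``$\ord_{\pp}(\alpha)\ge 2\mathfrak{t}_s$, in which case $\mathfrak{t}_{\nu}=2\mathfrak{t}_s$,'' but this only holds under strict inequality. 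Consequently the assertion $\min(\mathfrak{t}_m,\mathfrak{t}_{\nu})=\mathfrak{t}_{\nu}$ can fail, and the ``easy numerical checks'' you propose do not cover this regime.

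The paper addresses exactly this by carrying one extra case: when $\mathfrak{t}_{\nu}>\mathfrak{t}_s$ and $\mathfrak{t}_{\nu}\ge\mathfrak{t}_m$, one uses $\mathfrak{t}_{\nu}\ge\mathfrak{t}_m\ge\ord_{\pp}(\alpha)+e+1>\ord_{\pp}(\alpha)$ to force $2\mathfrak{t}_s=\ord_{\pp}(\alpha)$, whence $\mathfrak{t}_s\le\ord_{\pp}(\alpha)$ and condition~(3) follows from $\mathfrak{t}_m+\mathfrak{t}_{\nu}-\min(\mathfrak{t}_m,\mathfrak{t}_{\nu})=\mathfrak{t}_{\nu}\ge\ord_{\pp}(\alpha)+e+1\ge\mathfrak{t}_s+e+1$. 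The remaining case $\mathfrak{t}_m>\mathfrak{t}_{\nu}>\mathfrak{t}_s$ is then split according to whether $2\mathfrak{t}_s>\mathfrak{t}_{\nu}$ or $2\mathfrak{t}_s\le\mathfrak{t}_{\nu}$. Once you replace your uniform bound on $\mathfrak{t}_{\nu}$ by this four-case analysis, the proof goes through as you outlined.
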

\begin{proof}
First, we assume that $s_2\in\pp$. Then, using the arguments in the proof of Theorem \ref{thm::diagonalp}, we have to count the solutions of the system (\ref{eqn::systempalter1}). Since $\mathfrak{t}_m\geq\ord_{\pp}(\alpha)+e+1\geq e+1$, we can invoke Lemma \ref{thm::system1} to see that the number of solutions to the system (\ref{eqn::systempalter1}) is $\norm(\pp)^{t-\mathfrak{t}_m}$ when $t\geq\mathfrak{t}_m$. Therefore, we have
$$
\beta_{\pp}^{+}(X;X)=\norm(\pp)^{-\mathfrak{t}_m}.
$$

Second, we assume that $s_2\in\oo^{\times}$. In this case, we may assume that $s_2=1$. Then using the arguments in the proof of Theorem \ref{thm::diagonalp}, we have to count the solutions of the system (\ref{eqn::systempalter2}). To apply Lemma \ref{thm::system1} with $s=s_1$, we have to show that the following conditions of Lemma \ref{thm::system1} hold:
\begin{enumerate}
\item $\mathfrak{t}_m+\mathfrak{t}_{\nu}-2\min(\mathfrak{t}_m,\mathfrak{t}_{\nu})\geq e$ if $\mathfrak{t}_s\geq\mathfrak{t}_{\nu}$;
\item $\mathfrak{t}_m+\mathfrak{t}_{\nu}-\min(\mathfrak{t}_m,\mathfrak{t}_{\nu})\geq\mathfrak{t}_s+e+1$ if $\mathfrak{t}_s<\mathfrak{t}_{\nu}$.
\end{enumerate}

First, if $\mathfrak{t}_{\nu}=0$, then the condition holds since $\mathfrak{t}_m\geq\ord_{\pp}(\alpha)+e+1$. Second, if $\mathfrak{t}_s\geq\mathfrak{t}_{\nu}\geq1$, then it follows that $\mathfrak{t}_{\nu}=\ord_{\pp}(\alpha)$. Thus we have $\mathfrak{t}_m>\mathfrak{t}_{\nu}$, because otherwise we arrive at a contradiction that $\mathfrak{t}_m\leq\mathfrak{t}_{\nu}=\ord_{\pp}(\alpha)\leq\mathfrak{t}_m-e-1$. Thus, we have
$$
\mathfrak{t}_m+\mathfrak{t}_{\nu}-2\min(\mathfrak{t}_m,\mathfrak{t}_{\nu})=\mathfrak{t}_m-\mathfrak{t}_{\nu}=\mathfrak{t}_m-\ord_{\pp}(\alpha)\geq e.
$$
Third, suppose that $\mathfrak{t}_{\nu}>\mathfrak{t}_s$ and $\mathfrak{t}_{\nu}\geq\mathfrak{t}_m$. Using the assumption $\mathfrak{t}_{\nu}\geq\mathfrak{t}_m\geq\ord_{\pp}(\alpha)+e+1$, we see that $2\mathfrak{t}_s=\ord_{\pp}(\alpha)$. Therefore, we have 
$$
\mathfrak{t}_m+\mathfrak{t}_{\nu}-\min(\mathfrak{t}_m,\mathfrak{t}_{\nu})=\mathfrak{t}_{\nu}\geq\ord_{\pp}(\alpha)+e+1\geq\mathfrak{t}_s+e+1.
$$
Finally, we assume that $\mathfrak{t}_m>\mathfrak{t}_{\nu}>\mathfrak{t}_s$. We claim that $\mathfrak{t}_m\geq\mathfrak{t}_s+e+1$ under this assumption. To see this, if $2\mathfrak{t}_s>\mathfrak{t}_{\nu}$, then $\mathfrak{t}_{\nu}=\ord_{\pp}(\alpha)$. It follows that $\mathfrak{t}_m\geq\mathfrak{t}_{\nu}+e+1>\mathfrak{t}_s+e+1$. If $2\mathfrak{t}_s\leq\mathfrak{t}_{\nu}$, then $2\mathfrak{t}_s\leq\ord_{\pp}(\alpha)$. It follows that $\mathfrak{t}_m\geq2\mathfrak{t}_s+e+1\geq\mathfrak{t}_s+e+1$. Hence, we have
$$
\mathfrak{t}_m+\mathfrak{t}_{\nu}-\min(\mathfrak{t}_m,\mathfrak{t}_{\nu})=\mathfrak{t}_m\geq\mathfrak{t}_s+e+1.
$$
For all of these cases, we have shown that the conditions of Lemma \ref{thm::system1} hold. Therefore, applying Lemma \ref{thm::system1}, we see that
$$
\beta_{\pp}^{+}(X;X)=\norm(\pp)^{-\mathfrak{t}_m+\min(\mathfrak{t}_m,\mathfrak{t}_{\nu},\mathfrak{t}_s)},
$$
as desired.
\end{proof}
\begin{remark}
\label{thm:minimumbound}
It is worth noting that if $L$ is of type (1), then we have
$$
\min(\mathfrak{t}_m,\mathfrak{t}_{\nu},\mathfrak{t}_s)\ll\ord_{\pp}(\alpha).
$$
In fact, if $\ord_{\pp}(\alpha)=0$, then either $\mathfrak{t}_{\nu}=0$ or $\mathfrak{t}_s=0$. Suppose that $\ord_{\pp}(\alpha)\geq1$. If $\mathfrak{t}_m<\ord_{\pp}(\alpha)+e+1$, then the fact is obvious. If $\mathfrak{t}_m\geq\ord_{\pp}(\alpha)+e+1$, then it is a byproduct in the discussion of the four cases in the proof of Theorem \ref{thm::diagonal2}. Since the assumption $\pp\mid2$ is not used in the proof of Theorem \ref{thm::diagonal2}, this fact holds even when $\pp\nmid2$.
\end{remark}

If $L$ is of type (2), then the structure of $O^{+}(X,\pp^t)$ is relatively simple by Lemma \ref{thm::explicitorthogonalgroup}. Therefore, an explicit formula for $\beta_{\pp}^{+}(X;X)$ follows from straightforward calculations.

\begin{theorem}
\label{thm::hyperboliccase}
Assume that $L$ is of type (2). Then we have
$$
\beta_{\pp}^{+}(X;X)=
\begin{dcases}
1-\norm(\pp)^{-1},&\text{ if }\mathfrak{t}_m=0,\\
\norm(\pp)^{-\mathfrak{t}_m},&\text{ if }\mathfrak{t}_m\geq1.
\end{dcases}
$$
\end{theorem}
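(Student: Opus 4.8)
The plan is to exploit the fact that for type (2) everything is explicit and elementary. By Lemma~\ref{thm::explicitorthogonalgroup} the matrices in $O^{+}(L,\pp^t)$ are exactly the diagonal matrices $M_L(x_1,x_2)=\operatorname{diag}(x_1,x_2)$, and since $\mathfrak{t}_L=0$ the defining conditions reduce to $x_1x_2\equiv1\pmod{\pp^{t}}$, that is, $x_1\in(\oo/\pp^t)^{\times}$ and $x_2\equiv x_1^{-1}\pmod{\pp^{t}}$. Hence $|O^{+}(L,\pp^t)|=|(\oo/\pp^t)^{\times}|=\norm(\pp)^{t}-\norm(\pp)^{t-1}$, and dividing by $\norm(\pp)^{t}$ and letting $t\to\infty$ gives $\beta_{\pp}^{+}(L;L)=1-\norm(\pp)^{-1}$, which is precisely the asserted value when $\mathfrak{t}_m=0$.

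For $\mathfrak{t}_m\geq1$, the additional condition cutting out $O^{+}(X,\pp^t)$ inside $O^{+}(L,\pp^t)$ is that $\operatorname{diag}(x_1,x_2)$ fix $(s_1,s_2)^{T}$ modulo $\pp^{\mathfrak{t}_m}$, i.e. $x_1s_1\equiv s_1$ and $x_2s_2\equiv s_2$ modulo $\pp^{\mathfrak{t}_m}$. Recall that $s_1\in\oo^{\times}$ or $s_2\in\oo^{\times}$. If $s_1\in\oo^{\times}$, then $x_1s_1\equiv s_1\pmod{\pp^{\mathfrak{t}_m}}$ forces $x_1\equiv1\pmod{\pp^{\mathfrak{t}_m}}$, and then the determinant relation $x_2\equiv x_1^{-1}\pmod{\pp^{t}}$ together with $\mathfrak{t}_m\leq t$ yields $x_2\equiv1\pmod{\pp^{\mathfrak{t}_m}}$, so the second congruence is automatic; the case $s_2\in\oo^{\times}$ is identical with the roles of the two coordinates interchanged. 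In either case $O^{+}(X,\pp^t)$ consists exactly of the matrices $\operatorname{diag}(x_1,x_1^{-1})$ with $x_1\equiv1\pmod{\pp^{\mathfrak{t}_m}}$; since $\mathfrak{t}_m\geq1$ every such $x_1$ is automatically a unit of $\oo/\pp^t$, so their number is $\norm(\pp)^{t-\mathfrak{t}_m}$. Therefore $|O^{+}(X,\pp^t)|=\norm(\pp)^{t-\mathfrak{t}_m}$, and $\beta_{\pp}^{+}(X;X)=\norm(\pp)^{-\mathfrak{t}_m}$.

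No serious obstacle arises here. The only point deserving a moment's care is that the congruence $x_1\equiv1\pmod{\pp^{\mathfrak{t}_m}}$ must be transported to $x_2$ through the determinant condition, and that the argument has to be run for whichever of $s_1,s_2$ is a unit. In contrast to types (1) and (3), no lower bound on $\mathfrak{t}_m$ is needed, because for type (2) the congruence moduli are exactly $\pp^{t}$ (as $\mathfrak{t}_L=0$) rather than $\pp^{t+e}$, so none of the Hensel-type lemmas of Section~\ref{sec::pre} is invoked and the count is exact for every $t\geq\max(\mathfrak{t}_m,1)$.
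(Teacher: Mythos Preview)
Your proof is correct and follows the same approach as the paper: you invoke Lemma~\ref{thm::explicitorthogonalgroup} to identify $O^{+}(L,\pp^t)$ with diagonal matrices satisfying $x_1x_2\equiv1\pmod{\pp^t}$, obtain $|O^{+}(L,\pp^t)|=\norm(\pp)^t-\norm(\pp)^{t-1}$ and $|O^{+}(X,\pp^t)|=\norm(\pp)^{t-\mathfrak{t}_m}$, and then read off $\beta_{\pp}^{+}(X;X)$ from the definition. The paper's proof states these counts without further justification, so your version simply fills in the details (the case split on which of $s_1,s_2$ is a unit, and the transport of the congruence from $x_1$ to $x_2$ via the determinant condition).
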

\begin{proof}
By Lemma \ref{thm::explicitorthogonalgroup}, we see that $|O^{+}(L,\pp^t)|=\norm(\pp)^t-\norm(\pp)^{t-1}$ and $|O^{+}(X,\pp^t)|=\norm(\pp)^{t-\mathfrak{t}_m}$ whenever $t\geq\max(\mathfrak{t}_m,\mathfrak{t}_L+1)$. If $\mathfrak{t}_m=0$, then we have
$$
\beta_{\pp}^{+}(X;X)=1-\norm(\pp)^{-1},
$$
and if $\mathfrak{t}_m\geq1$, then we have
$$
\beta_{\pp}^{+}(X;X)=\norm(\pp)^{-\mathfrak{t}_m},
$$
as desired.
\end{proof}

If $L$ is of type (3), then we have to assume that $\mathfrak{t}_m$ is sufficiently large to apply Lemma \ref{thm::system2} and Lemma \ref{thm::system3}.

\begin{theorem}
\label{thm::remainingcase}
Assume that $L$ is of type (3). If $\mathfrak{t}_m\geq e+1$, then we have
$$
\beta_{\pp}^{+}(X;X)=\norm(\pp)^{-\mathfrak{t}_m}.
$$
\end{theorem}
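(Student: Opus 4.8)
The plan is to imitate the proof of Theorem~\ref{thm::diagonalp}: express $|O^{+}(X,\pp^t)|$ as the number of solutions of an explicit system of congruences and then recognize that system as one of~(\ref{eqn::system2}),~(\ref{eqn::system3}). Since $\mathfrak{t}_{\alpha}\geq1$ we have $\max(\mathfrak{t}_m,\mathfrak{t}_L+1)=\mathfrak{t}_m$, so for $t\geq\mathfrak{t}_m$ Lemma~\ref{thm::explicitorthogonalgroup} writes every element of $O^{+}(X,\pp^t)$ as $M_L(x_1,x_2)$. Expanding $\det M_L(x_1,x_2)\equiv1\pmod{\pp^{t+e-\mathfrak{t}_{\alpha}}}$ and $M_L(x_1,x_2)\binom{s_1}{s_2}\equiv\binom{s_1}{s_2}\pmod{\pp^{\mathfrak{t}_m}}$ shows that $|O^{+}(X,\pp^t)|$ equals the number of $(x_1,x_2)$ modulo $\pp^t$ satisfying
$$
\begin{dcases}
x_1^2+\dfrac{2x_1x_2}{\alpha}+\dfrac{\beta x_2^2}{\alpha}\equiv1\pmod{\pp^{t+e-\mathfrak{t}_{\alpha}}},\\
s_1(x_1-1)-\dfrac{\beta}{\alpha}s_2x_2\equiv0\pmod{\pp^{\mathfrak{t}_m}},\\
s_2(x_1-1)+s_1x_2+\dfrac{2}{\alpha}s_2x_2\equiv0\pmod{\pp^{\mathfrak{t}_m}},
\end{dcases}
$$
where $\beta/\alpha$ and $2/\alpha$ lie in $\oo$ by the type~(3) hypothesis $\mathfrak{t}_{\alpha}\leq\min(e,\ord_{\pp}(\beta))$. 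Because rescaling $\nu$ by a unit does not alter $O^{+}(X,\pp^t)$, I would split into the cases $s_2\in\oo^{\times}$ and $s_2\in\pp$.

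If $s_2\in\oo^{\times}$, normalize $s_2=1$ and set $s\coloneqq s_1$. Solving the second congruence for $x_1$ gives $x_1\equiv1-(s+\tfrac{2}{\alpha})x_2\pmod{\pp^{\mathfrak{t}_m}}$, and substituting this into the first shift congruence turns it into $Q_{\nu}x_2\equiv0\pmod{\pp^{\mathfrak{t}_m}}$, that is $x_2\equiv0\pmod{\pp^{\mathfrak{t}_m-\mathfrak{t}_{\nu}}}$, with $Q_{\nu}$ and $\mathfrak{t}_{\nu}$ exactly as in Lemma~\ref{thm::system3}; conversely these two congruences imply both shift congruences. Hence the displayed system is precisely~(\ref{eqn::system3}), and since $\mathfrak{t}_m\geq e+1$ Lemma~\ref{thm::system3} gives $|O^{+}(X,\pp^t)|=\norm(\pp)^{t-\mathfrak{t}_m}$ for all $t\geq\mathfrak{t}_m+\mathfrak{t}_{\alpha}$, so $\beta_{\pp}^{+}(X;X)=\norm(\pp)^{-\mathfrak{t}_m}$ by~(\ref{eqn::definebeta}).

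If $s_2\in\pp$, then $s_1\in\oo^{\times}$ by the standing hypothesis on $\nu$; normalize $s_1=1$. The first shift congruence gives $x_1\equiv1+\tfrac{\beta}{\alpha}s_2x_2\pmod{\pp^{\mathfrak{t}_m}}$, and feeding this into the second yields $\bigl(1+\tfrac{\beta s_2^2}{\alpha}+\tfrac{2s_2}{\alpha}\bigr)x_2\equiv0\pmod{\pp^{\mathfrak{t}_m}}$. Here $\ord_{\pp}(\beta s_2^2/\alpha)=\ord_{\pp}(\beta)-\mathfrak{t}_{\alpha}+2\ord_{\pp}(s_2)\geq2\ord_{\pp}(s_2)\geq2$ and $\ord_{\pp}(2s_2/\alpha)=e-\mathfrak{t}_{\alpha}+\ord_{\pp}(s_2)\geq\ord_{\pp}(s_2)\geq1$, using $\mathfrak{t}_{\alpha}\leq\min(e,\ord_{\pp}(\beta))$ together with $s_2\in\pp$, so the coefficient of $x_2$ is a unit; hence $x_2\equiv0\pmod{\pp^{\mathfrak{t}_m}}$ and then $x_1\equiv1\pmod{\pp^{\mathfrak{t}_m}}$, and conversely these imply both shift congruences. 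Thus the system reduces to~(\ref{eqn::system2}), and Lemma~\ref{thm::system2} (applicable since $\mathfrak{t}_m\geq e+1$) gives $|O^{+}(X,\pp^t)|=\norm(\pp)^{t-\mathfrak{t}_m}$ for $t\geq\mathfrak{t}_m+\mathfrak{t}_{\alpha}$, so again $\beta_{\pp}^{+}(X;X)=\norm(\pp)^{-\mathfrak{t}_m}$.

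The matrix and determinant expansions and the forward/backward checks of the two reductions are routine; the one step that genuinely uses the full type~(3) inequalities $1\leq\mathfrak{t}_{\alpha}\leq\min(e,\ord_{\pp}(\beta))$ is showing that $1+\tfrac{\beta s_2^2}{\alpha}+\tfrac{2s_2}{\alpha}$ is a unit when $s_2\in\pp$. I do not anticipate a real obstacle here: in contrast to Theorem~\ref{thm::diagonal2}, the hypothesis $\mathfrak{t}_m\geq e+1$ is already exactly what Lemmas~\ref{thm::system2} and~\ref{thm::system3} require, so no additional case analysis on $\mathfrak{t}_{\nu}$ is needed.
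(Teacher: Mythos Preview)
Your proposal is correct and follows essentially the same approach as the paper: write out the system coming from Lemma~\ref{thm::explicitorthogonalgroup}, split on whether $s_2$ is a unit, and reduce to Lemma~\ref{thm::system3} or Lemma~\ref{thm::system2} respectively. The only difference is that in the case $s_2\in\pp$ the paper simply asserts the equivalence with~(\ref{eqn::system2}), whereas you normalize $s_1=1$ and verify explicitly that the coefficient $1+\tfrac{\beta s_2^2}{\alpha}+\tfrac{2s_2}{\alpha}$ is a unit; this is exactly the computation the paper suppresses.
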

\begin{proof}
By Lemma \ref{thm::explicitorthogonalgroup}, if $t\geq\max(\mathfrak{t}_m,\mathfrak{t}_L+1)$, we see that $|O^{+}(X,\pp^t)|$ is equal to the number of solutions modulo $\pp^t$ to the system
\begin{equation}
\label{eqn::systemab}
\begin{dcases}
x_1^2+\frac{2x_1x_2}{\alpha}+\frac{\beta x_2^2}{\alpha}\equiv1\pmod{\pp^{t+e-\mathfrak{t}_{\alpha}}},\\
s_1(x_1-1)-\frac{\beta s_2x_2}{\alpha}\equiv0\pmod{\pp^{\mathfrak{t}_m}},\\
s_2(x_1-1)+\left(s_1+\frac{2s_2}{\alpha}\right)x_2\equiv0\pmod{\pp^{\mathfrak{t}_m}}.
\end{dcases}
\end{equation}

First, we assume that $s_2\in\pp$. Then it follows from our assumption that $s_1\in\oo^{\times}$. Thus the system (\ref{eqn::systemab}) is equivalent to the system
\begin{equation}
\label{eqn::systemabalter1}
\begin{dcases}
x_1^2+\frac{2x_1x_2}{\alpha}+\frac{\beta x_2^2}{\alpha}\equiv1\pmod{\pp^{t+e-\mathfrak{t}_{\alpha}}},\\
x_1\equiv1\pmod{\pp^{\mathfrak{t}_m}},\\
x_2\equiv0\pmod{\pp^{\mathfrak{t}_m}}.
\end{dcases}
\end{equation}
Then it follows from Lemma \ref{thm::system2} that the number of solutions modulo $\pp^t$ to the system (\ref{eqn::systemabalter1}) is $\norm(\pp)^{t-\mathfrak{t}_m}$. Hence, we have
$$
\beta_{\pp}^{+}(X;X)=\norm(\pp)^{-\mathfrak{t}_m}.
$$

Second, we assume that $s_2\in\oo^{\times}$. Without loss of generality, we may assume that $s_2=1$. Then the system (\ref{eqn::systemab}) is equivalent to the system
$$
\begin{dcases}
x_1^2+\frac{2x_1x_2}{\alpha}+\frac{\beta x_2^2}{\alpha}\equiv1\pmod{\pp^{t+e-\mathfrak{t}_{\alpha}}},\\
x_1\equiv1-\left(s_1+\frac{2}{\alpha}\right)x_2\pmod{\pp^{\mathfrak{t}_m}},\\
x_2\equiv0\pmod{\pp^{\mathfrak{t}_m-\mathfrak{t}_{\nu}}},
\end{dcases}
$$
with $\mathfrak{t}_{\nu}\coloneqq\ord_{\pp}(s_1^2+\frac{2s_1}{\alpha}+\frac{\beta}{\alpha})$. Applying Lemma \ref{thm::system3} with $s=s_1$, we can conclude that the number of elements in $O^{+}(X,\pp^t)$ is $\norm(\pp)^{t-\mathfrak{t}_m}$. Hence, we have
$$
\beta_{\pp}^{+}(X;X)=\norm(\pp)^{-\mathfrak{t}_m},
$$ 
as desired.
\end{proof}

\section{Proofs of the main results}
\label{sec::main}

Finally, we are ready to prove the main results.

\begin{proof}[Proof of Theorem \ref{thm::maintheorem1}]
By Lemma \ref{thm::rewriteorthogonalgrouptotal} for $\pp\mid\mathfrak{M}_X$, for sufficiently large $t$, we have
$$
[O^{+}(L_{\pp})\colon O^{+}(X_{\pp})]=\frac{|O^{+}(L_{\pp},\pp^t)|}{|O^{+}(X_{\pp},\pp^t)|}=\frac{\beta_{\pp}^{+}(L_{\pp};L_{\pp})}{\beta_{\pp}^{+}(X_{\pp};X_{\pp})}.
$$
Combining with Proposition \ref{thm::classnumberformula}, we have
$$
h^{+}(X)=\frac{h^{+}(L)}{[O^{+}(L):O^{+}(X)]}\prod_{\pp\mid\mathfrak{M}_X}\frac{\beta_{\pp}^{+}(L_{\pp};L_{\pp})}{\beta_{\pp}^{+}(X_{\pp};X_{\pp})}.
$$
By Theorem \ref{thm::diagonalp}, we have
$$
\prod_{\pp\mid\mathfrak{M}_X,\pp\nmid2}\frac{\beta_{\pp}^{+}(L_{\pp};L_{\pp})}{\beta_{\pp}^{+}(X_{\pp};X_{\pp})}=\prod_{\pp\mid\mathfrak{M}_X,\pp\nmid2}\frac{\norm_{K/\QQ}(\pp^{\ord_{\pp}(\mathfrak{M}_X)})}{\norm_{K/\QQ}(\gcd(\pp^{\ord_{\pp}(\mathfrak{M}_X)},\pp^{\ord_{\pp}(\mathfrak{I}_X)})}\prod_{\substack{\pp\mid\mathfrak{M}_X,\pp\nmid2\\M_{\pp}=cD(1,\alpha)}}\left(1-\frac{\eta(-\alpha)}{\norm_{K/\QQ}(\pp)}\right).
$$
By Theorem \ref{thm::diagonal2}, we have
$$
\prod_{\substack{\pp\mid\mathfrak{M}_X,\pp\mid2\\M_{\pp}=cD(1,\alpha)}}\frac{\beta_{\pp}^{+}(L_{\pp};L_{\pp})}{\beta_{\pp}^{+}(X_{\pp};X_{\pp})}=\prod_{\substack{\pp\mid\mathfrak{M}_X,\pp\mid2\\M_{\pp}=cD(1,\alpha)}}\frac{\norm_{K/\QQ}(\pp^{\ord_{\pp}(\mathfrak{M}_X)})\beta_{\pp}^{+}(L_{\pp};L_{\pp})}{\norm_{K/\QQ}(\gcd(\pp^{\ord_{\pp}(\mathfrak{M}_X)},\pp^{\ord_{\pp}(\mathfrak{I}_X)})}.
$$
By Theorem \ref{thm::hyperboliccase} and Theorem \ref{thm::remainingcase}, we have
$$
\prod_{\substack{\pp\mid\mathfrak{M}_X,\pp\mid2\\M_{\pp}\neq cD(1,\alpha)}}\frac{\beta_{\pp}^{+}(L_{\pp};L_{\pp})}{\beta_{\pp}^{+}(X_{\pp};X_{\pp})}=\prod_{\substack{\pp\mid\mathfrak{M}_X,\pp\mid2\\M_{\pp}\neq cD(1,\alpha)}}\norm_{K/\QQ}(\pp^{\ord_{\pp}(\mathfrak{M}_X)})\beta_{\pp}^{+}(L_{\pp};L_{\pp}).
$$
Using the fact that the norm $\norm_{K/\QQ}$ is multiplicative and noting that $\pp\nmid\mathfrak{I}_X$ if $M_{\pp}$ is of type (2)-(3), we can deduce that
$$
h^{+}(X)=\frac{h^{+}(L)}{[O^{+}(L):O^{+}(X)]}\frac{\beta_2^{+}(X)\norm_{K/\QQ}(\mathfrak{M}_X)}{\norm_{K/\QQ}(\gcd(\mathfrak{M}_X,\mathfrak{I}_X))}\prod_{\substack{\pp\mid\mathfrak{M}_X,\pp\nmid2\\M_{\pp}=cD(1,\alpha)}}\left(1-\frac{\eta(-\alpha)}{\norm_{K/\QQ}(\pp)}\right),
$$
as desired.
\end{proof}

\begin{proof}[Proof of Corollary \ref{thm::maintheorem2}]
Without loss of generality, we may assume that $\mathfrak{s}_L\subseteq\oo$. Since $X$ is totally positive, the orthogonal group $O^{+}(L)$ is finite. Therefore, we see that 
$$
[O^{+}(L):O^{+}(X)]\ll_{L}1.
$$
By Remark \ref{thm:minimumbound}, we have
$$
\norm_{K/\QQ}(\gcd(\mathfrak{M}_X,\mathfrak{I}_X))\ll\prod_{\substack{\pp\mid\mathfrak{M}_X\\M_{\pp}=cD(1,\alpha)}}\norm_{K/\QQ}(\pp^{\ord_{\pp}(\alpha)}).
$$
Since $L_{\pp}$ is unimodular for all but finitely many places $\pp\in\Omega_f$, we see that $\norm_{K/\QQ}(\pp^{\ord_{\pp}(\alpha)})=1$ for all but finitely many places $\pp\in\Omega_f$. It follows that
$$
\norm_{K/\QQ}(\gcd(\mathfrak{M}_X,\mathfrak{I}_X))\ll_L1.
$$
Finally, we use a trick of Ramanujan to bound
$$
\prod_{\substack{\pp\mid\mathfrak{M}_X,\pp\nmid2\\M_{\pp}=cD(1,\alpha)}}\left(1-\frac{\eta(-\alpha)}{\norm_{K/\QQ}(\pp)}\right)\gg_{L,\varepsilon}\norm_{K/\QQ}(\mathfrak{M}_X)^{-\varepsilon},
$$
for any positive number $\varepsilon>0$. Fix a place $\pp\mid\mathfrak{M}_X$. If $\eta(-\alpha)<0$, then it is clear that we have
$$
1-\frac{\eta(-\alpha)}{\norm_{K/\QQ}(\pp)}>\norm_{K/\QQ}(\pp^{\ord_{\pp}(\mathfrak{M}_X)})^{-\varepsilon}.
$$
If $\eta(-\alpha)=1$, then the same conclusion holds for all but finitely many places, because the left-hand side is increasing and the right-hand side is decreasing, as $\norm_{K/\QQ}(\pp)$ grows. Since there are only finitely many places with $\eta(-\alpha)=1$ leftover, we see that
$$
1-\frac{\eta(-\alpha)}{\norm_{K/\QQ}(\pp)}\gg_{\epsilon}\norm_{K/\QQ}(\pp^{\ord_{\pp}(\mathfrak{M}_X)})^{-\varepsilon}.
$$
Overall, we see that
$$
\prod_{\substack{\pp\mid\mathfrak{M}_X,\pp\nmid2\\M_{\pp}=cD(1,\alpha)}}\left(1-\frac{\eta(-\alpha)}{\norm_{K/\QQ}(\pp)}\right)\gg_{L,\varepsilon}\norm_{K/\QQ}(\mathfrak{M}_X)^{-\varepsilon}.
$$
Hence, the lower bound follows from Theorem \ref{thm::maintheorem1} immediately.
\end{proof}

\begin{proof}[Proof of Corollary \ref{thm::maintheorem3}]
Since $h^{+}(X)=h$ is fixed, by Corollary \ref{thm::maintheorem2}, the norm of the conductor ideal is bounded. There are only finitely many ideals with a bounded norm. For each fixed ideal $\mathfrak{M}$, there are only finitely many shifted lattices with $\mathfrak{M}_X=\mathfrak{M}$. Hence, there are only finitely many shifted lattices of the form $X=L+\nu$ with $h^{+}(X)=h$.
\end{proof}

\section*{Acknowledgement}

The author would like to thank Zilong He for proofreading Lemma \ref{thm::threetypes}, Ben Kane for his guidance, and Wai Kiu Chan for his useful comments on an earlier version of the results in this paper. The author also wants to express sincere gratitude to anonymous referees for reading this manuscript carefully and for providing insightful comments and suggestions, which significantly improve the clarity and the quality of this paper.

\bibliographystyle{plain}

\end{document}